\newenvironment{proof}[1][Proof]{\textbf{#1.} }{\hfill $\square$}
\newtheorem{lmm}{Lemma}
\newtheorem{thrm}{Theorem}
\newtheorem{dfntn}{Definition}
\newtheorem{rmrk}{Remark}
\newtheorem{xmpl}{Example}
\newtheorem{prpstn}{Proposition}
\newenvironment{enonce}[2][]%
{\noindent  \textbf{#2}} %
{\hfill $\diamond$}
\newcommand{\bord}{\partial \mathcal{S}}
\newcommand{\tOm}{\widetilde{\Omega}}
\newcommand{\R}{\mathbb{R}}
\newcommand{\N}{\mathbb{N}}
\newcommand{\Prb}{\mathbb{P}}
\newcommand{\E}{\mathbb{E}}
\newcommand{\bD}{\mathbb{D}}
\newcommand{\bM}{\mathbb{M}}
\newcommand{\bH}{\mathbb{H}}
\newcommand{\bL}{\mathbb{L}}
\newcommand{\bS}{\mathbb{S}}
\newcommand{\bF}{\mathbb{F}}
\newcommand{\tri}{\mathcal{F}}
\newcommand{\cP}{\mathcal{P}}
\newcommand{\cR}{\mathcal{R}}
\newcommand{\cD}{\mathcal{D}}
\newcommand{\cS}{\mathcal{S}}
\newcommand{\cE}{\mathcal{E}}
\newcommand{\cM}{\mathcal{M}}
\newcommand{\tP}{\widetilde{\mathcal{P}}}
\newcommand{\eps}{\varepsilon}
\newcommand{\al}{\alpha}
\newcommand{\la}{\lambda}
\newcommand{\ind}{\mathbf{1}}
\newcommand{\tr}{\text{Trace}}
\newcommand{\tmu}{\widetilde{\mu}}
\newcommand{\ope}{\mathcal{L}}
\newcommand{\supp}{\text{supp}}
\newcommand{\cad}{c\`adl\`ag }
\title{Limit behaviour of the minimal solution of a BSDE with jumps and with singular terminal condition.}
\author{A. Popier \\*
LUNAM Universit\'e, Universit\'e du Maine, \\* Laboratoire Manceau de Math\'ematiques,\\*
 Avenue O. Messiaen, 72085 Le Mans cedex 9\\* France}
\begin{document}

\maketitle

\begin{abstract} 
We study the behaviour at the terminal time $T$ of the minimal solution of a backward stochastic differential equation when the terminal data can take the value $+\infty$ with positive probability. In a previous paper \cite{krus:popi:15}, we have proved existence of this minimal solution (in a weak sense) in a quite general setting. But two questions arise in this context and were still open: is the solution right continuous with left limits on $[0,T]$? In other words does the solution have a left limit at time $T$? The second question is: is this limit equal to the terminal condition? In this paper, under additional conditions on the generator and the terminal condition, we give a positive answer to these two questions. 
\end{abstract}
%
%
\noindent {\bf AMS class:} 60G99, 60H99, 60J15.\\
\noindent {\bf Keywords:} Backward stochastic differential equations / Jumps / General filtration / Singularity.

\section*{Introduction}

Backward stochastic differential equations (BSDE) were introduced in \cite{bism:73} in the linear case and extended in the non linear case in \cite{pard:peng:90}. Since then a huge literature has been developed on this topic and on their applications (see for example \cite{delo:13} or \cite{pard:rasc:14} and the references therein). In this paper we consider a filtered probability space $(\Omega,\tri, \bF,\Prb)$ with a complete and right-continuous filtration $\bF=\{\tri_t, \ t \geq 0\}$. We assume that this space supports a Brownian motion $W$ and a Poisson random measure $\mu$ with intensity with intensity $\lambda(de)dt$ on a space $E$. $\tilde \mu$ denotes the compensated related martingale. We consider the following BSDE:
\begin{equation}\label{eq:BSDE_jumps}
Y_t=\xi + \int_t^T f(s,Y_s,Z_s,U_s) ds - \int_t^T Z_s dW_s - \int_t^T\int_E U_s(e) \tilde{\mu}(ds,de) - \int_t^T dM_s
\end{equation}
where $f$ is the generator and $\xi$ is the terminal condition. The solution is the quadruplet $(Y,Z,U,M)$. Since no particular assumption is made on the underlying filtration, there is the additional martingale part $M$, orthogonal to $W$ and $\mu$. It is already established that such a BSDE has a unique solution when the terminal condition $\xi$ belongs to $L^p(\Omega,\mathcal{F}_T,\mathbb{P})$, $p >1$ (see among others \cite{barl:buck:pard:97}, \cite{delo:13} or \cite{krus:popi:14}). 

When the terminal condition $\xi$ satisfies
\begin{equation} \label{eq:sing_term_cond}
\Prb(\xi = +\infty)  >0
\end{equation}
we called the BSDE {\it singular}. This singular case has been studied in \cite{popi:06} when the filtration is generated by the Brownian motion (no jump, no additional noise, i.e. $U=M=0$) and for the particular generator $f(t,y,z,u)=f(y)=-y|y|^q$. Recently singular BSDE were used to solve a particular stochastic control problem with application to portfolio management (see \cite{anki:jean:krus:13} or \cite{grae:hors:qiu:13}). In this framework, the intensity $\lambda$ is finite and the generator does not depend on $z$ and has the following form:
\begin{equation} \label{eq:example_control}
f(t,y,u) = - \frac{y|y|^{q}}{q \alpha_t^{q}} - \widehat \beta(t,y,u) + \gamma_t.
\end{equation}
where the function $\widehat \beta$ is given by
\begin{equation}\label{eq:generater_control_psi}
\widehat \beta(t,y,u) = \int_E (y+ u(e)) \left( 1- \frac{\beta_t(e)}{\left((y+ u(e))^{q}+ \beta_t(e)^{q} \right)^{1/q}}  \right) \ind_{y+ u(e) \geq 0} \ \lambda(de)
\end{equation}
and where $\alpha$, $\beta$ and $\gamma$ are positive processes. The minimal solution $(Y,Z,U,M)$ (provided it exists) gives the value function of the following control problem: minimize\footnote{with the convention $0.\infty = 0$.}
\begin{equation}\label{eq:control_pb_intro}
\E \left[  \int_t^T \left( \alpha_s |\eta_s|^p + \gamma_s |X_s|^p + \int_E \beta_s(e) |\zeta_s(e)|^p \lambda(de) \right) ds  + \xi  |X_T|^p \bigg| \tri_t \right]
\end{equation}
over all progressively measurable processes $X$ that satisfy the dynamics
\begin{equation*}
X_s =x +\int_t^s \eta_u du +\int_t^s \int_E \zeta_u(e) \mu(de,du)
\end{equation*}
and the terminal state constraint 
\begin{equation*}
X_T \ind_{\xi=\infty}= 0.
\end{equation*}
$p$ is the H\"older conjugate of $1+q$. For the financial point of view, the set $\{\xi=+\infty\}$ is a specification of a set of market scenarios where liquidation is mandatory. The value function is equal to $|x|^p Y_t$ and the optimal state process $X^*$ can be computed directly with $Y$ and $U$. Note that the martingale part of the solution $(Z,M)$ is not employed in the computation of the optimal state process. Thus the control problem can be completely solved provided the BSDE has a minimal solution (see Section 2 and Theorem 4 in \cite{krus:popi:15} for more details on the control problem).

In \cite{krus:popi:15}, under some technical sufficient assumptions on $f$ (Conditions \textbf{(A)} below), it is proved that the BSDE \eqref{eq:BSDE_jumps} with singular terminal condition \eqref{eq:sing_term_cond} has a minimal super-solution $(Y,Z,U,M)$ such that a.s.
\begin{equation} \label{eq:term_cond_super_sol}
\liminf_{t\to T} Y_t \geq \xi.
\end{equation}
The main requirement is that $f$ decreases w.r.t. $y$ at least as a polynomial function (almost like $-y^{1+q}$, $q>0$), when $y$ is large. The main difficulty is to obtain some a priori estimate, which states that $Y_t$ is bounded from above for any $t < T$ by a finite process (Inequality \eqref{eq:a_priori_estimate}). The construction of $(Y,Z,U,M)$ without Condition \eqref{eq:term_cond_super_sol} can be made if the filtration $\bF$ is only complete and right-continuous. 

In the classical setting ($\xi \in L^p(\Omega)$), $Y$ has a limit as $t$ increases to $T$ since the process is solution of the BSDE \eqref{eq:BSDE_jumps} and thus is c\`adl\`ag\footnote{French acronym for right continuous with left limits.}. Moreover this limit is equal to $\xi$ a.s. if the filtration $\bF$ is left-continuous at time $T$. Indeed we need to avoid a jump at time $T$ of the orthogonal martingale $M$ (see the later discussion in Section \ref{sect:setting}). Hence in the singular case the behaviour \eqref{eq:term_cond_super_sol} of the super-solution $Y$ at time $T$ is obtained under this additional requirement on the filtration $\bF$. For the related control problem \eqref{eq:control_pb_intro}, this weak behaviour \eqref{eq:term_cond_super_sol} at time $T$ of the minimal process $Y$ is sufficient to obtain the optimal control and the value function. Nevertheless two natural questions arise here:
\begin{itemize}
\item Can we expect that the left limit at time $T$ of the minimal solution $Y$ exists ? In other words is $Y$ c\`adl\`ag on $[0,T]$ ? 
\item Can the inequality \eqref{eq:term_cond_super_sol} be an equality if the filtration is quasi left-continuous ? 
\end{itemize}
The aim of this paper is to give an (at least partial) answer to these two questions. 

\subsubsection*{Related literature}

As far as we know, there are only two works on this topic: \cite{popi:06} in the Brownian setting and the third chapter of the PhD thesis of Piozin in the Brownian-Poisson setting, both for the particular generator $f(y) = -y|y|^q$. 

The study of this question in the Brownian setting was firstly made in \cite{popi:06}. The proof was decomposed into two parts: firstly the existence of the limit, secondly the equality
\begin{equation} \label{eq:equality_liminf_T}
\liminf_{t\to T} Y_t = \xi.
\end{equation}
For the existence of the limit no additional assumption was required. But the particular structure of $f$ was very important. For the second part to prove the equality \eqref{eq:equality_liminf_T}, a localization procedure was used, working only in the Markovian framework. To be more precised the terminal condition $\xi$ is equal to $h(X_T)$ where $h$ is a function from $\R^d$ to $[0,+\infty]$ with a closed non empty singular set $\cS=\{ h = +\infty\}$ and $X$ is a forward diffusion given as the solution of a SDE. For $q \leq 2$, Malliavin's calculus and an integration by parts were used, in order to transfert the control of $Z_t = D_t Y_t$ to the control of $Y_t$. Since the density of the process $X$ appears in this integration by parts, additional conditions were required on the diffusion $X$ (especially uniform ellipticity). For $q > 2$, no particular hypothesis was imposed on $X$ since we had a suitable estimate on $Z$. In some sense if the non linearity $q$ is not large enough, it should be compensated by more regularity on $X$. 

In the third chapter of the PhD thesis of Piozin, we have studied the Brownian-Poisson case, again with the generator $f(y) = -y|y|^q$. The existence of the limit is treated exactly as in \cite{popi:06}. For the proof of the equality \eqref{eq:equality_liminf_T} we worked in the Markovian setting: $\xi = h(X_T)$, and we assumed that $q>2$ since we had an a priori estimate on $Z$ and $U$. Moreover the localization procedure involves the infinitesimal generator of $X$ which contains a non local part due to the jumps in the forward SDE. In order to have suitable estimates on this part, we needed other technical assumptions on the jumps of the solution $X$ and the singular set $\cS = \{\xi = +\infty\}$.

\subsubsection*{Contributions and decomposition of the paper}

In this paper we want to broaden these results in two directions: more general generator $f$ and no restriction on the filtration. We are not able to give a general result for any generator $f$ and any final value $\xi$. We just have sufficient conditions on the generator $f$ and the terminal value $\xi$ as in \cite{popi:06}. Unfortunately there is still a gap between existence conditions of the minimal super-solution and ``continuity at time $T$'' assumptions. \begin{itemize}
\item We can prove the existence of this limit only if the generator $f$ has some specific structure (Theorem \ref{thm:exists_limit}). Our main requirement is that the growth of $f$ w.r.t. $y$ could be ``controlled'' uniformly w.r.t. $z$ and $u$. The filtration $\bF$ satisfies the usual assumptions: complete and right-continuous.
\item The proof of the equality \eqref{eq:equality_liminf_T} is a different problem. First we will assume that the filtration $\bF$ is such that a martingale cannot have a jump at time $T$. This property holds if $\bF$ is quasi left-continuous, which is a common assumption (see the discussion in Section \ref{ssect:filtration}). The setting will be half Markovian: $\xi=h(X_T)$. But the generator is not supposed to depend only on the forward process $X$. Hence the setting is not completely Markovian. As explained before we will assume that $q$ is large enough ($q>2$ if $f(y) = -y|y|^q$). The complete result is Theorem \ref{thm:equality}.
\end{itemize}
Our method here is very closed to \cite{popi:06}. Nevertheless some added difficulties appear here; let us briefly explain them. For the existence of the left limit of $Y$, if the generator $f$ does not depend on $U$, the proof is almost the same as in \cite{popi:06}. In other words the presence of $U$ in the generator is the main trouble and the reason why additional hypotheses are made on $f$. To obtain the desired equality \eqref{eq:equality_liminf_T}, as in \cite{popi:06}, we will prove a stronger a priori estimate on $Z$ and $U$ (extension of a known result in the Brownian setting). This point is rather technical but not surprising. Next we will restrict ourselves to the case where $\xi$ depends only on the terminal value of a forward diffusion $X$: $\xi = \Phi(X_T)$. The complexity does not come from the filtration or the martingale part. Indeed with a left-continuity condition at time $T$ on the filtration $\bF$, the martingale $M$ will not have a jump at time $T$ and the martingale part of the BSDE will be killed by taking the expectation. But the novelty is the non local part $\mathcal{I}$ of the infinitesimal generator of $X$ (given by \eqref{eq:non_local_gene}). This term comes from the jumps of $X$ generated by the Poisson random measure. We will be able to have a suitable estimate on this term under a technical condition (denoted by {\bf (E)}). This assumption connects the jumps of $X$ with the singularity set $\mathcal{S} = \{\Phi = +\infty\}$ and is original and unusual. 

The paper is organized as follows. In the first part we give the precise mathematical framework and we recall the known result: existence of the minimal solution of the BSDE \eqref{eq:BSDE_jumps}. We also discuss the required conditions on the filtration $\bF$. Then in Section \ref{sect:exist_limit} we prove existence of a left limit, that is $Y$ is c\`adl\`ag on $[0,T]$ (Theorem \ref{thm:exists_limit} and its proof) under Condition \textbf{(B)}. In the last section we want to prove Equality \eqref{eq:equality_liminf_T}. First we will show that the generator cannot be singular. We complete the results of \cite{krus:popi:15} with an a priori estimate of the coefficients $Z$ and $U$. And finally under other technical assumptions we prove Equality \eqref{eq:equality_liminf_T} (Theorem \ref{thm:equality}). Since the conditions in the second and third sections are not exactly the same, we gather all conditions in the short last section in order to obtain the continuity of $Y$ at time $T$
\begin{equation} \label{eq:continuity_T}
\lim_{t\to T} Y_t = \xi.
\end{equation}
Along the paper we will always consider Example \ref{ex:control_ex} (generator $f$ given by \eqref{eq:example_control} and \eqref{eq:generater_control_psi}), with the important particular subcases: 
\begin{itemize}
\item Example \ref{ex:toy_ex}: $f(y)=-y|y|^q$ (toy example). 
\item Example \ref{ex:power_sing}: $\displaystyle f(t,y)=-(T-t)^\varsigma y|y|^q + \frac{1}{(T-t)^\varpi}$, with real numbers $\varsigma$ and $\varpi$. 
\end{itemize}

The open questions ($q$ small, more general generator $f$, non Markovian setting, etc.) are left for further developments. 

\section{Setting, known results, filtration} \label{sect:setting}

We consider a filtered probability space $(\Omega,\tri,\Prb,\bF = (\tri_t)_{t\geq 0})$. The filtration is assumed to be complete and right continuous. Note that all martingales have right continuous modifications in this setting and we will always assume that we are taking the right continuous version, without any special mention. 
We also suppose sometimes that the filtration is quasi-left continuous (as in \cite{krus:popi:14} and \cite{krus:popi:15}), which means that for every sequence $(\tau_n)$ of $\bF$ predictable stopping times such that $\tau_n \nearrow \tilde \tau$ for some stopping time $\tilde \tau$ we have $\bigvee_{n\in \N}\tri_{\tau_n}=\tri_{\tilde \tau}$. This condition is unimportant for the existence and/or uniqueness of the solution of the BSDE (see \cite{bouc:poss:zhou:15} for more details on this technical point). 
 
We assume that $(\Omega,\tri,\Prb,\bF = (\tri_t)_{t\geq 0})$ supports a $d$-dimensional Brownian motion $W$ and a Poisson random measure $\mu$ with intensity $\lambda(de)dt$ on the space $E \subset \R^{d'} \setminus \{0\}$. We will denote $\cE$ the Borelian $\sigma$-field of $E$ and $\tmu$ is the compensated measure: for any $A \in \cE$ such that $\lambda(A)  < +\infty$, then $\tmu([0,t] \times A) = \mu([0,t] \times A) - t \lambda(A)$ is a martingale. The measure $\lambda$ is $\sigma$-finite on $(E,\cE)$ satisfying 
$$\int_E (1\wedge |e|^2) \lambda(de) < +\infty.$$

In this paper for a given $T\geq 0$, we denote:
\begin{itemize}
\item $\cP$: the predictable $\sigma$-field on $\Omega \times [0,T]$ and
$$\tP=\cP \otimes \cE.$$
\item On $\tOm = \Omega \times [0,T] \times E$, a function that is $\tP$-measurable, is called predictable. $G_{loc}(\mu)$ is the set of $\tP$-measurable functions $\psi$ on $\tOm$ such that for any $t \geq 0$ a.s.
$$ \int_0^t \int_E (|\psi_s(e)|^2\wedge |\psi_s(e)|) \lambda(de) < +\infty.$$
\item $\cD$ (resp. $\cD(0,T)$): the set of all predictable processes on $\R_+$ (resp. on $[0,T]$). $L^2_{loc}(W)$ is the subspace of $\cD$ such that for any $t\geq 0$ a.s.
$$\int_0^t |Z_s|^2 ds < +\infty.$$
\item $ \cM_{loc}$: the set of c\`adl\`ag local martingales orthogonal to $W$ and $\tmu$. If $M \in \cM_{loc}$ then
$$[ M, W^i]_t =0, 1\leq i \leq k \qquad [M ,\tmu(A,.)]_t = 0$$
for all $A\in \cE$.
In other words, $\E (\Delta M * \mu | \tP) = 0$, where the product $*$ denotes the integral process (see II.1.5 in \cite{jaco:shir:03}). Roughly speaking, the jumps of $M$ and $\mu$ are independent. 
\item $\cM$ is the subspace of $ \cM_{loc}$ of martingales.
\end{itemize}
We refer to \cite{jaco:shir:03} for details on random measures and stochastic integrals. On $\R^d$, $|.|$ denotes the Euclidean norm and $\R^{d\times d'}$ is identified with the space of real matrices with $d$ rows and $d'$ columns. If $z \in  \R^{d\times d'}$, we have $|z|^2 = \mbox{trace}(zz^*)$. 

Now to define the solution of our BSDE, let us introduce the following spaces for $p\geq 1$.
\begin{itemize}
\item $\bD^p(0,T)$ is the space of all adapted \cad processes $X$ such that
$$\E \left(  \sup_{t\in [0,T]} |X_t|^p \right) < +\infty.$$
For simplicity, $X_* = \sup_{t\in [0,T]} |X_t|$.
\item $\bH^p(0,T)$ is the subspace of all processes $X\in \cD(0,T)$ such that
$$\E \left[ \left( \int_0^T |X_t|^2 dt\right)^{p/2} \right] < +\infty.$$
\item $\bM^p(0,T)$ is the subspace of $\cM$ of all martingales such that
$$\E \left[ \left( [ M ]_T \right)^{p/2}\right] < +\infty.$$
\item $\bL^p_\mu(0,T) = \bL^p_{\mu}(\Omega\times (0,T)\times E)$: the set of processes $\psi \in G_{loc}(\mu)$ such that
$$\E \left[ \left(  \int_0^T \int_{E} |\psi_s(e)|^2 \mu(ds,de) \right)^{p/2} \right] < +\infty .$$
\item $\bL^p_\lambda(E)=\bL^p(E,\lambda;\R^m)$: the set of measurable functions $\psi : E \to \R^m$ such that
$$\| \psi \|^p_{\bL^p_\lambda} = \int_{E} |\psi(e)|^p \lambda(de)  < +\infty .$$
\item $\bS^p(0,T) = \bD^p(0,T) \times \bH^p(0,T) \times \bL^p_\mu(0,T) \times \bM^p(0,T)$.
\end{itemize}
If $M$ is a $\R^d$-valued martingale in $\cM$, the bracket process $[ M ]_t$ is
$$[ M ]_t = \sum_{i=1}^d [ M^i ]_t,$$
where $M^i$ is the $i$-th component of the vector $M$. 

We consider the BSDE \eqref{eq:BSDE_jumps}
\begin{equation*} 
Y_t = \xi + \int_t^T f(s,Y_s, Z_s,U_s) ds  -\int_t^T Z_s dW_s- \int_t^T\int_E U_s(e) \tmu(de,ds)- \int_t^T dM_s.
\end{equation*}
Here, the random variable $\xi$ is $\tri_T$-measurable with values in $\R$ and the generator $f : \Omega \times [0,T] \times \R \times \R^{d} \times \bL^2_\lambda(E) \to \R$ is a random function, measurable with respect to $Prog \times \mathcal{B}(\R)\times \mathcal{B}(\R^{d}) \times \mathcal{B}(\bL^2_\lambda(E))$ where $Prog$ denotes the sigma-field of progressive subsets of $\Omega \times [0,T]$. The unknowns are $(Y,Z,U,M)$ such that
\begin{itemize}
\item $Y$ is progressively measurable and \cad with values in $\R$;
\item $Z \in L^2_{loc}(W)$, with values in $\R^{d}$;
\item $U \in G_{loc}(\mu)$ with values in $\R$;
\item $M \in \cM_{loc}$ with values in $\R$.
\end{itemize}
For notational convenience we will denote $f^0_t = f(t,0,0,0)$. 

\subsubsection*{Assumptions}

\begin{itemize}
\item $\xi$ and $f^0_t$ are non negative and $\Prb(\xi = +\infty) > 0$. $\cS$ is the set of singularity: 
$$\cS = \{\xi=+\infty \}.$$
\item The function $y\mapsto f(t,y,z,u)$ is continuous and monotone: there exists $\chi \in \R$ such that a.s. and for any $t \in [0,T]$, $z \in \R^k$ and $u \in \bL^2_\la(E)$
\begin{equation}\label{eq:f_mono} \tag{A1}
(f(t,y,z,u)-f(t,y',z,u))(y-y') \leq \chi (y-y')^2.
\end{equation}
\item For every $n> 0$ the function
\begin{equation}\label{eq:f_growth_y_t} \tag{A2}
\sup_{|y|\leq n} |f(t,y,0,0)-f^0_t| \in L^1((0,T) \times \Omega).
\end{equation}
\item $f$ is Lispchitz in $z$, uniformly w.r.t. all parameters: there exists $L > 0$ such that for any $(t,y,u)$, $z$ and $z'$: a.s.
\begin{equation}\label{eq:f_lip_z} \tag{A3}
|f(t,y,z,u)-f(t,y,z',u)| \leq L |z-z'|.
\end{equation}
\item There exists a progressively measurable process $\kappa = \kappa^{y,z,u,v} : \Omega \times \R_+ \times E \to \R$ such that
\begin{equation}\label{eq:f_jump_comp} \tag{A4}
f(t,y,z,u)-f(t,y,z,v) \leq \int_E (u(e)-v(e))  \kappa^{y,z,u,v}_t(e)  \la(de)
\end{equation}
with $\Prb \otimes Leb \otimes \la$-a.e. for any $(y,z,u,v)$, $-1 \leq \kappa^{y,z,u,v}_t(e)$
and $|\kappa^{y,z,u,v}_t(e)|\leq \vartheta(e)$ where $\vartheta \in \bL^2_\la(E)$.
\end{itemize}
Note that no assumption on $f^0$ (expect non negativity) is required. Conditions \eqref{eq:f_mono}-\eqref{eq:f_jump_comp} will ensure existence and uniqueness of the solution for a version of BSDE \eqref{eq:BSDE_jumps}, where the terminal condition $\xi$ is replaced by $\xi \wedge n$ and where the generator $f$ is replaced by $f_n = f-f^0 + (f^0\wedge n)$ for some $n>0$ (see BSDE \eqref{eq:trunc_BSDE} below). We obtain the minimal supersolution (see Theorem \ref{thm:exists_super_sol}) with singular terminal condition $\xi$ by letting the truncation $n$ tend to $\infty$. To ensure that in the limit (when $n$ goes to $\infty$) the solution component $Y$ attains the value $\infty$ on $\cS$ at time $T$ but is finite before time $T$, we suppose that 
\begin{itemize}
\item There exists a constant $q > 0$ and a positive process $a$ such that for any $y \geq 0$
\begin{equation}\label{eq:f_upper_bound} \tag{A5}
f(t,y,z,u)\leq - (a_t) y|y|^{q} + f(t,0,z,u).
\end{equation}

\end{itemize}
$p=1+\frac{1}{q}$ is the H\"older conjugate of $1+q$. Moreover, in order to derive the a priori estimate, the following assumptions will hold.
\begin{itemize}
\item There exists some $\ell > 1$ such that
\begin{equation}\label{eq:alpha_gamma} \tag{A6}
\E \int_0^T \left[ \left( \frac{1}{qa_s}\right)^{1/q}+ (T-s)^p f^0_s\right]^{\ell} ds < +\infty.
\end{equation}
\item There exists $k > \max (2, \ell/(\ell-1))$ such that 
\begin{equation}\label{eq:f_growth_psi} \tag{A7}
\int_E |\vartheta(e)|^{k} \la(de) < +\infty.
\end{equation}
\end{itemize}
\begin{dfntn}
The generator $f$ satisfies Conditions {\rm \textbf{(A)}} if all assumptions \eqref{eq:f_mono}--\eqref{eq:f_growth_psi} hold.
\end{dfntn}

\begin{rmrk}[on Assumptions {\rm \textbf{(A)}}]
$\ $ 
\begin{enumerate}
\item By very classical arguments we can suppose w.l.o.g.\ that $\chi=0$ in \eqref{eq:f_mono}. \textbf{In the rest of the paper, we assume that $\chi=0$.}
\item Assumptions \eqref{eq:f_growth_y_t} and \eqref{eq:f_upper_bound} imply that the process $a$ must be in $L^1((0,T)\times \Omega)$. Indeed from \eqref{eq:f_upper_bound} with $z=u=0$ and $y=1$, we obtain
$$f(t,1,0,0)- f^0_t \leq - (a_t) \Rightarrow a_t \leq  |f(t,1,0,0)- f^0_t|.$$
Condition \eqref{eq:f_growth_y_t} leads to the integrability of $a$.
\item The fourth condition \eqref{eq:f_jump_comp} implies that $f$ is Lipschitz continuous w.r.t. $u$ uniformly in $\omega$, $t$, $y$ and $z$: 
$$|f(t,y,z,u)-f(t,y,z,v)| \leq \|\vartheta\|_{L^2_\la} \|u-v\|_{L^2_\la}  = L \|u-v\|_{L^2_\la}.$$
This assumption \eqref{eq:f_jump_comp} is used to compare two solutions of the BSDE \eqref{eq:BSDE_jumps} with different terminal conditions (see Theorem 4.1 and Assumption 4.1 in \cite{quen:sule:13} or Proposition 4 in \cite{krus:popi:14}). 
\item The generator $f$ can be also ``singular'' at time $T$ provided Assumption \eqref{eq:alpha_gamma} holds (see examples below). Note that BSDEs with singular generator were already studied in \cite{jean:mast:poss:15} and \cite{jean:reve:14}, but the setting is completely different. 
\item Moreover if the condition \eqref{eq:alpha_gamma} holds for some $\ell > 1$, it remains true for any $1 \leq \ell'\leq \ell$. But with an additional cost in Condition \eqref{eq:f_growth_psi}.
\end{enumerate}
\end{rmrk}

\begin{xmpl}[Main example] \label{ex:control_ex}
{\rm 
The case given by equation \eqref{eq:example_control} has been developed in \cite{anki:jean:krus:13}, \cite{grae:hors:qiu:13} and \cite{krus:popi:15}. The process $a_t$ is $1/(q\alpha_t^q)$. All previous assumptions are satisfied if $\beta_t(e) \geq 0$ for any $t$ and $e$, $1/\alpha^{q}$ is in $L^1((0,T)\times \Omega)$, $\alpha \in L^\ell((0,T)\times \Omega)$ and if the non negative process $\gamma = f^0$ satisfies Condition \eqref{eq:alpha_gamma}. 

\hfill $\diamond$
}
\end{xmpl}

\begin{xmpl}[Toy example] \label{ex:toy_ex}
{\rm 
The function $f(y) = -y|y|^q$ satisfies all previous conditions. It corresponds to generator \eqref{eq:example_control} with $\alpha_t = (1/q)^{1/q}$, $\beta_t(e) = +\infty$ and $\gamma_t = 0$. 

\hfill $\diamond$
}
\end{xmpl}

\begin{xmpl}[With power singularity] \label{ex:power_sing}
{\rm
We will also discuss the case:
$$f(t,y) = -(T-t)^\varsigma y|y|^q + \frac{1}{(T-t)^\varpi}$$
where $\varsigma$ and $\varpi$ are two real numbers. Since $a_t = (T-t)^\varsigma$ is in $L^1(0,T)$, $\varsigma$ must be greater than $-1$. Note that this lower bound is necessary to have an optimal control in \eqref{eq:control_pb_intro} (see Example 1.1 in \cite{anki:jean:krus:13}). Condition \eqref{eq:alpha_gamma} imposes that 
$$\int_0^T (T-t)^{-\ell \varsigma/q} + (T-t)^{\ell (p-\varpi)} dt < +\infty.$$
This implies the following bounds: 
$$-1 < \varsigma < q, \qquad \varpi < 1 + 1/q + 1/\ell.$$
with $1 \leq \ell$ and $\ell < q/\varsigma$ if $\varsigma> 0$. 
The singularity ot time $T$ of the generator has to be not too important (upper bound on $\varpi$) and the coefficient $a$ before $y|y|^q$ can degenerate at time $T$, but not too quickly (upper bound on $\varsigma$). 

\hfill $\diamond$
}
\end{xmpl}

\subsection{Known results}

In \cite{krus:popi:14}, we proved that if $\xi \in L^p(\Omega)$, for some $p \geq 2$, then under Conditions {\rm {\bf (A)}} there exists a unique solution $(Y,Z,U,M)$ in $\bS^p(0,T)$ to the BSDE \eqref{eq:BSDE_jumps}. In \cite{krus:popi:15}, the following result is proved. 
\begin{thrm}[Theorem 1 in \cite{krus:popi:15}]\label{thm:exists_super_sol} 
Under Conditions {\rm {\bf (A)}} there exists a process $(Y,Z,U,M)$ such that 
\begin{itemize}
\item $(Y,Z,U,M)$ belongs to $\bS^\ell(0,t)$ for any $t < T$.  
\item A.s. for any $t\in [0,T]$, $Y_t \geq 0$.
\item For all $0\leq s \leq t < T$:
\begin{equation*}
Y_s=Y_t + \int_s^t f(t,Y_r,Z_r,U_r) dr-\int_s^t Z_r dW_r-\int_s^t\int_E U_s(e) \tilde{\mu}(ds,de) + M_T-M_t.
\end{equation*}
\item If the filtration $\bF$ is quasi left continuous, $(Y,Z,U,M)$ is a super-solution in the sense that a.s. \eqref{eq:term_cond_super_sol} holds:
\begin{equation*}
\liminf_{t\to T} Y_t \geq \xi.
\end{equation*}
\end{itemize}
Any process $(\tilde Y, \tilde Z, \tilde U,\tilde M)$ satisfying the previous four items is called \textbf{super-solution} of the BSDE \eqref{eq:BSDE_jumps} with singular terminal condition $\xi$.
\end{thrm}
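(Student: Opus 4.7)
The strategy is the standard monotone approximation from singular BSDE theory. For each integer $n \geq 1$, set $\xi^n = \xi \wedge n$ and $f_n(t,y,z,u) = f(t,y,z,u) - f^0_t + (f^0_t \wedge n)$. Since $\xi^n$ is bounded and $f_n - f$ depends only on $(\omega,t)$, the generator $f_n$ satisfies \eqref{eq:f_mono}--\eqref{eq:f_jump_comp} with the same constants as $f$ and moreover $0 \leq f_n(t,0,0,0) \leq n$. The existence/uniqueness result of \cite{krus:popi:14} then yields a unique solution $(Y^n, Z^n, U^n, M^n) \in \bS^\ell(0,T)$ to the truncated BSDE
\begin{equation*}
Y^n_t = \xi^n + \int_t^T f_n(s,Y^n_s,Z^n_s,U^n_s) ds - \int_t^T Z^n_s dW_s - \int_t^T\!\!\int_E U^n_s(e) \tmu(ds,de) - \int_t^T dM^n_s.
\end{equation*}
Non-negativity $Y^n \geq 0$ follows from the comparison principle (Proposition 4 in \cite{krus:popi:14}) applied to $Y^n$ and to the trivial solution $\widetilde Y \equiv 0$ of the BSDE with zero data, using $\xi^n \geq 0$ and $f^0 \geq 0$. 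The same principle, combined with $\xi^n \leq \xi^{n+1}$ and $f_n \leq f_{n+1}$ pointwise in $(y,z,u)$, yields the monotonicity $Y^n \leq Y^{n+1}$.

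The core of the proof is an a priori estimate asserting that $Y^n$ is bounded above by a process $\eta$, finite on $[0,T)$, independently of $n$. Exploiting \eqref{eq:f_upper_bound}, namely $f(t,y,z,u) \leq -a_t y^{1+q} + f(t,0,z,u)$, one formally compares $Y^n$ with the minimal solution of the scalar backward ODE $-d\eta_t = -a_t \eta_t^{1+q} dt + f^0_t\, dt$ terminating at $+\infty$, whose deterministic $f^0=0$ version has the explicit form $\eta^{\mathrm{ODE}}_t = \bigl(q\int_t^T a_s ds\bigr)^{-1/q}$. The rigorous step is an It\^o expansion of $(Y^n_t)^{1+q}$: the $-a_s Y^{2+q}$ term produced by \eqref{eq:f_upper_bound} controls $Y^n$, Young's inequality and \eqref{eq:f_lip_z} absorb the $|Z^n|^2$ contribution, and the Poissonian part is dominated using \eqref{eq:f_jump_comp} together with the moment bound \eqref{eq:f_growth_psi}; the source $f^0$ enters against the kernel $(T-s)^p$. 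The resulting bound $\eta$ combines $(qa_s)^{-1/q}$ and $(T-s)^p f^0_s$ in exactly the form appearing inside \eqref{eq:alpha_gamma}, which guarantees $\eta \in \bL^\ell((0,T)\times\Omega)$. This estimate is the main technical obstacle: identifying the correct ansatz and controlling the jump term $\int_E U^n_s(e)\tmu(ds,de)$ under \eqref{eq:f_growth_psi} with $k > \max(2,\ell/(\ell-1))$ are the delicate points. Once it is in place, $Y_t := \lim_{n\to\infty} Y^n_t$ satisfies $0 \leq Y_t \leq \eta_t < +\infty$ a.s.\ on $[0,T)$ and $Y \in \bD^\ell(0,t')$ for every $t' < T$.

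It remains to pass to the limit on compact subintervals and to obtain the terminal behaviour. Fix $t' < T$. Applying It\^o's formula to $|Y^n - Y^m|^2$, invoking \eqref{eq:f_mono} with $\chi=0$, the Lipschitz bound \eqref{eq:f_lip_z} and the jump comparison \eqref{eq:f_jump_comp}, then Burkholder-Davis-Gundy, and using the uniform bound $Y^n \leq \eta$ on $[0,t']$, one checks that $(Y^n, Z^n, U^n, M^n)$ is Cauchy in $\bS^\ell(0,t')$ with limit $(Y, Z, U, M)$; passing to the limit in the truncated BSDE on $[s,t']$ yields the BSDE relation of the third bullet. For the super-solution property, assume $\bF$ is quasi left-continuous. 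For each fixed $n$, the solution $Y^n$ of the bounded BSDE is c\`adl\`ag on $[0,T]$ with $Y^n_T = \xi \wedge n$; quasi left-continuity prevents $M^n$ from jumping at $T$, so $Y^n_{T-} = \xi \wedge n$ a.s. Since $Y \geq Y^n$ by monotonicity, $\liminf_{t\to T} Y_t \geq \xi \wedge n$ a.s.\ for every $n$, and letting $n \to \infty$ yields \eqref{eq:term_cond_super_sol}.
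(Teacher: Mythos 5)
Your proposal follows essentially the same route as the paper's own sketch: truncation $\xi\wedge n$, $f_n=f-f^0+(f^0\wedge n)$, comparison to get monotonicity and non-negativity, the uniform a priori estimate \eqref{eq:a_priori_estimate} to define $Y$ as the increasing limit, a Cauchy argument in $\bS^\ell(0,T-\eps)$ to pass to the limit in the dynamics, and quasi left-continuity of $\bF$ to rule out a jump of $M^n$ at $T$ so that $\liminf_{t\to T}Y_t\geq \xi\wedge n$ for each $n$. The only point where you diverge is in the internal derivation of the a priori estimate (the paper defers this to Proposition 2 of \cite{krus:popi:15}, which proceeds by comparison with an explicit supersolution after linearizing the $z$- and $u$-dependence, rather than by an It\^o expansion of $(Y^n)^{1+q}$), but this does not affect the correctness of the overall argument.
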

A key point in the construction made in \cite{krus:popi:15} is the following a priori estimate:
\begin{equation}\label{eq:a_priori_estimate}
Y_t \leq \frac{K_{\ell,L,\vartheta}}{(T-t)^{1+1/q}} \left\{\E \left( \ \int_t^{T} \left[ \left(\frac{1}{qa_s}\right)^{1/q} + (T-s)^{1+1/q} f^0_s \right]^{\ell} ds \bigg| \tri_t\right) \right\}^{1/\ell}
\end{equation}
where $K_{\ell,L,\vartheta}$ is a non negative constant depending only on $\ell$, $L$ and $\vartheta$ and this constant is a non decreasing function of $L$ and $\vartheta$ and a non increasing function of $\ell$. Condition \eqref{eq:alpha_gamma} implies that a.s. $Y_t < +\infty$ on $[0,T)$. 

\begin{rmrk}
$\ $
\begin{itemize}
\item The constants $K_{\ell,L,\vartheta}$ and $\ell > 1$ come from the growth condition on $f$ w.r.t. $z$ and $u$. 
\item If we assume that $f(t,0,z,u)$ is uniformly bounded from above by $K_f$, in \eqref{eq:a_priori_estimate} we can take $\ell=1$ and $K_{\ell,L,\vartheta}=1$ and we add $\frac{K_f}{2+1/q}(T-t)$ . 
\end{itemize}
\end{rmrk}

\begin{enonce}{Back to the examples.}
\begin{itemize}
\item Example \ref{ex:toy_ex}. If $f(y)=-y|y|^q$, $a_t =1$, $K_f=0$, and we obtain as in \cite{popi:06}:
$$Y_t \leq \left( \frac{1}{q(T-t)} \right)^{1/q}.$$
\item Example \ref{ex:power_sing}. Recall that $-1 < \varsigma < q$ and $\varpi < 2 + 1/q$. Here again one can take $K_f= 0$ and 
$$Y_t \leq \left(\frac{1}{q}\right)^{1/q}\frac{q}{q-\varsigma} \frac{1}{(T-t)^{(1+\varsigma)/q}} + \frac{1}{2+1/q- \varpi} \frac{1}{(T-t)^{\varpi -1}}.$$
\end{itemize}
\end{enonce}

Let us finish this section by the minimality of the solution.
\begin{prpstn}[Minimal solution]
The solution $(Y,Z,U,M)$ obtained by approximation is minimal. If $(\widetilde Y, \widetilde Z, \widetilde U, \widetilde M)$ is another non negative super-solution, then for all $t \in [0,T]$, $\Prb$-a.s. $\widetilde Y_t \geq Y_t$. 
\end{prpstn}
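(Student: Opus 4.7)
The plan is to employ the standard approximation-and-comparison scheme for singular BSDEs. Recall that the constructed solution $Y$ is the pointwise increasing limit of $(Y^n)_{n\geq 1}$, where $(Y^n,Z^n,U^n,M^n)\in \bS^p(0,T)$ is the unique solution on $[0,T]$ of the BSDE with bounded terminal $\xi\wedge n$ and truncated generator $f_n=f-f^0+(f^0\wedge n)\leq f$. It therefore suffices to show that for every $n\geq 1$ and every $t\in[0,T)$, $\widetilde Y_t\geq Y^n_t$ a.s.; passing to the monotone limit in $n$ then concludes.

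Fix $n$ and $\epsilon>0$. On $[0,T-\epsilon]$ both $\widetilde Y$ and $Y^n$ solve ordinary (non-singular) BSDEs, with terminal values $\widetilde Y_{T-\epsilon}$ and $Y^n_{T-\epsilon}$ and generators $f$ and $f_n$ respectively. I would apply the It\^o formula to $\varphi(\widetilde Y_s - Y^n_s)$ with $\varphi(x)=((x)^-)^2$ between $t$ and $T-\epsilon$, exactly as in the usual $L^p$-comparison argument for BSDEs with jumps (cf.\ Proposition~4 of \cite{krus:popi:14}). Linearising the generator difference $f(s,\widetilde Y,\widetilde Z,\widetilde U)-f_n(s,Y^n,Z^n,U^n)$ by inserting the intermediate values $f(s,Y^n,\widetilde Z,\widetilde U)$, $f(s,Y^n,Z^n,\widetilde U)$ and $f(s,Y^n,Z^n,U^n)$, the monotonicity condition \eqref{eq:f_mono} with $\chi=0$ absorbs the $y$-contribution with the correct sign, \eqref{eq:f_lip_z} with Young's inequality takes care of the $z$-contribution against the Brownian bracket $\ind_{\{\widetilde Y_s<Y^n_s\}}|\widetilde Z_s-Z^n_s|^2$ appearing in It\^o's formula, the kernel $\kappa$ in \eqref{eq:f_jump_comp} paired against the Poisson compensator (a non-negative Bregman-type term coming from convexity of $\varphi$) handles the $u$-contribution, and finally the remainder $f-f_n=f^0-(f^0\wedge n)\geq 0$ has the favourable sign against $-\varphi'\geq 0$. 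The martingale parts vanish upon taking expectation, and Gronwall's lemma delivers
\[
\E\bigl[((\widetilde Y_t-Y^n_t)^-)^2\bigr]\;\leq\; C\,\E\bigl[((\widetilde Y_{T-\epsilon}-Y^n_{T-\epsilon})^-)^2\bigr],
\]
for a constant $C$ depending on $T$, $L$ and $\|\vartheta\|_{\bL^2_\la}$ but not on $\epsilon$.

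Now let $\epsilon\downarrow 0$. Quasi-left-continuity of $\bF$ at time $T$ prevents any martingale component of the BSDE for $Y^n$ from jumping at $T$, so $Y^n_{T-\epsilon}\to Y^n_T=\xi\wedge n$ almost surely. Combined with the super-solution property $\liminf_{\epsilon\to 0}\widetilde Y_{T-\epsilon}\geq\xi$, this yields $\liminf(\widetilde Y_{T-\epsilon}-Y^n_{T-\epsilon})\geq\xi-(\xi\wedge n)\geq 0$ a.s., hence $(\widetilde Y_{T-\epsilon}-Y^n_{T-\epsilon})^-\to 0$ a.s. Since $\widetilde Y\geq 0$ one has the uniform bound $(\widetilde Y_{T-\epsilon}-Y^n_{T-\epsilon})^-\leq Y^n_{T-\epsilon}\leq\sup_{s\leq T}Y^n_s$, and the latter lies in $L^2$ by the classical $\bS^p$ theory for $Y^n$ (terminal bounded by $n$); dominated convergence forces the right-hand side of the displayed estimate to vanish. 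Thus $\widetilde Y_t\geq Y^n_t$ a.s., which is what was needed.

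The principal subtlety lies in the It\^o computation with the convex but non-smooth function $\varphi(x)=(x^-)^2$ in the presence of jumps, where one has to balance the non-negative Poisson-compensator contribution coming from convexity of $\varphi$ against the $u$-component of the linearised generator via \eqref{eq:f_jump_comp}. This is by now a standard manoeuvre in the $L^p$-BSDE-with-jumps literature and requires no new ideas beyond those already present in \cite{krus:popi:14,quen:sule:13}.
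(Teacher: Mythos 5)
The paper itself gives no proof of this proposition: it is imported from \cite{krus:popi:15} as a known result, and the argument there is exactly the one you sketch (compare $\widetilde Y$ with each truncated solution $Y^n$ on $[0,T-\eps]$ via the comparison/stability estimate of \cite{krus:popi:14,quen:sule:13}, send $\eps\to 0$ using $\liminf_{t\to T}\widetilde Y_t\geq \xi\geq\xi\wedge n$ together with $Y^n_{(T-\eps)}\to\xi\wedge n$ and domination by $Y^n_*\in L^2$, then let $n\to\infty$). So your structure is correct and is essentially the intended proof. Two technical points deserve care, though neither is a conceptual gap. First, a super-solution is only required to lie in $\bS^\ell(0,t)$ with $\ell>1$ possibly smaller than $2$, so the It\^o--Gronwall step should be run with $\varphi(x)=((x)^-)^\ell$ (or with a localization by stopping times followed by Fatou), since with $\varphi(x)=((x)^-)^2$ the stochastic integrals against $\widetilde Z$ and $\widetilde U$ need not be true martingales and the quadratic terms produced by Young's inequality need not be integrable; the cancellation must be performed pathwise inside the Lebesgue integral before taking expectations. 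Second, the absorption of the linearised jump term $\varphi'\int\kappa(\widetilde U-U^n)\la(de)$ into the Bregman remainder of $\varphi$ is not as automatic as in the Brownian case (the Bregman term of $((x)^-)^2$ does not dominate $|w|^2$ uniformly when the jump crosses zero); this is precisely why \cite{quen:sule:13} and \cite{krus:popi:14} prove the comparison via linearisation and a Dol\'eans--Dade change of measure, for which the condition $\kappa\geq -1$ and $|\kappa|\leq\vartheta\in\bL^2_\la(E)$ in \eqref{eq:f_jump_comp} are exactly what is needed. Since you cite those results, the cleanest fix is simply to invoke their (quantitative) comparison statement rather than redo the It\^o computation.
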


\vspace{0.5cm}
Now we give the main ideas of the proof of the existence result (Theorem \ref{thm:exists_super_sol}). It is important to study the behaviour of $Y$ in the next sections. The approach in \cite{krus:popi:15} is to approximate our BSDE by considering a terminal condition of the form $\xi^n:=\xi\wedge n$ and observe asymptotic behaviour.

In the rest of the paper, $(Y^n,Z^n,U^n,M^n)$ will be the solution of the truncated BSDE:
\begin{eqnarray}\label{eq:trunc_BSDE}
Y^n_t & = & \xi\wedge n + \int_t^T f_n(s,Y^n_s,Z^n_s,U^n_s) ds -\int_t^T Z^n_s dW_s \\ \nonumber
& -& \int_t^T\int_E U^n_s(e)\tilde{\mu}(ds,de) - (M^n_T-M^n_t).
\end{eqnarray}
Here $f_n(t,y,z,u)$ is the generator obtained by the truncation on $f^0_t$:
\begin{equation} \label{eq:truncated_gene}
f_n(t,y,z,u) = (f(t,y,z,u)-f^0_t) + (f^0_t \wedge n).
\end{equation}
Existence and uniqueness of $(Y^n,Z^n,U^n,M^n)$ comes from Theorem 2 in \cite{krus:popi:14}. Moreover using comparison argument (see \cite{krus:popi:14} or \cite{quen:sule:13}) we can obtain for $m \leq n$: $0\leq Y^m_t\leq Y^{n}_t $. And for any $n$, $Y^n$ satisfies Estimate \eqref{eq:a_priori_estimate} (Proposition 2 in \cite{krus:popi:15}). 
This allows us to define $Y$ as the increasing limit of the sequence
$(Y^n_t)_{n\geq 1}$: $$\forall\, t\in[0,T],\quad Y_t:=\lim_{n\rightarrow \infty} Y^n_t.$$
Proposition 3 in \cite{krus:popi:15} shows that there exists a constant $C$ such that for any $0 < t < T$
\begin{eqnarray}\label{eq:approx_estim_L2_main_ineq}
&&\E\left[\sup_{0\leq s\leq t} |Y^n_s-Y^m_s|^\ell+\left( \int_0^t |Z^n_s-Z^m_s|^2 ds \right)^{\ell/2} \right] \\ \nonumber
&& \qquad + \E \left[ \left( \int_0^t\int_E |U^n_s(e)-U^m_s(e)|^2 \mu(ds,de) \right)^{\ell/2} + [M^n_t - M^m_t]^{\ell/2}\right]\\ \nonumber
&& \\ \nonumber
&& \qquad \qquad \leq C\E\left[ |Y^n_t-Y^m_t|^\ell\right] + C \E \int_0^t |f^0_s\wedge n - f^0_s\wedge m|^\ell ds. 
\end{eqnarray}
Since $Y^n_t$ converges to $Y_t$ almost surely, with the a priori estimate \eqref{eq:a_priori_estimate}, Condition \eqref{eq:alpha_gamma} and Inequality \eqref{eq:approx_estim_L2_main_ineq}, thanks to the dominated convergence theorem, we can deduce:
\begin{enumerate}
\item For every $\varepsilon>0$, $(Y^n)_{n\geq1}$ converges to $Y$ in $\bD^\ell(0,T-\varepsilon)$.
\item  $(Z^n)_{n\geq1}$ is a Cauchy sequence in $\bH^\ell(0,T-\varepsilon)$, and converges
to $Z\in\bH^\ell(0,T-\varepsilon)$.
\item $(U^n)_{n\geq1}$ is a Cauchy sequence in $\bL^\ell_{\mu}(0,T-\varepsilon)$, and converges to $U\in\bL^\ell_{\mu}(0,T-\varepsilon)$. 
\item $(M^n)_{n\geq1}$ is a Cauchy sequence in $\bM^\ell(0,T-\varepsilon)$, and converges to $M\in\bM^\ell_{\mu}(0,T-\varepsilon)$. 
\end{enumerate}
The limit $(Y,Z,U,M)$ satisfies for every $0\leq t<T$, for all $0\leq s \leq t$
\begin{equation}\label{eq:bsde_dynamic}
Y_s=Y_t+\int_s^t f(r,Y_r,Z_r,U_r)dr -\int_s^t Z_r dB_r -\int_s^t \int_E U_r(e) \tilde{\mu}(dr,de) - M_t + M_s.
\end{equation}
and $Y$ satisfies Inequality \eqref{eq:a_priori_estimate}. Note that all these results are obtained without the quasi left-continuity assumption on the filtration $\bF$. But since the solution $(Y,Z,U,M)$ satisfies the dynamic \eqref{eq:bsde_dynamic} only on $[0,T-\eps]$ for any $\eps > 0$, we cannot derive directly the existence of a left limit at time $T$ for $Y$. 

\subsection{Condition on the filtration $\bF$} \label{ssect:filtration}

In the $L^p$-theory, we assume that the terminal condition $\xi$ belongs to $L^p(\Omega)$ and that the generator $f$ satisfies $f^0 \in L^p((0,T)\times \Omega)$ for some $p > 1$ (see \cite{bouc:poss:zhou:15} or \cite{krus:popi:14}). Then if $(Y,Z,U,M)$ is the solution of the BSDE \eqref{eq:BSDE_jumps} in $\bS^p$ (called $L^p$-solution), we have 
$$Y_{T^-} = \lim_{t\to T, \ t < T} Y_t = \xi -  (M_T - M_{T^-}) = \xi - \Delta M_T = Y_T - \Delta M_T.$$
In other words the orthogonal martingale $M$ could have a jump at time $T$ and thus \eqref{eq:equality_liminf_T} may not hold. For example, consider the filtration $\bF$ defined by
$$\tri_t = \{\emptyset,\Omega\}, \ t < 1, \qquad \tri_t = \sigma(X), \ t \geq 1, \ \mbox{with } \Prb(X=1)=\Prb(X=-1)=1/2.$$
$T$ is called a thin time\footnote{The author thanks Monique Jeanblanc for her precious information on this topic.} (see \cite{aksa:chou:jean:16} for the definition). Then for $T=1$, $f = 0$ and $\xi = \ind_{X=1}$, $Y_t= 1/2$ for any $t< T$. 

To ensure that \eqref{eq:equality_liminf_T} will be true, we need to impose an extra condition on the filtration $\bF$ to ensure that a martingale cannot have a jump at time $T$. A usual and enough condition is: the filtration $\bF$ is quasi left-continuous. For example if $\bF$ is generated by the Brownian motion and the Poisson random measure, this hypothesis is true. A sufficient and less strong condition is: the filtration $\bF$ is left-continuous at time $T$ (see the proof of Proposition 25.19 in \cite{kall:02}). The reader could find examples on non quasi left-continuous filtrations in Remark 1.9 \cite{aksa:chou:jean:16} (see also the references therein, in particular \cite{jaco:skor:94}). 

If we assume that such kind of assumption holds for $\bF$ and thus if a martingale cannot have a jump at time $T$, then we have a.s. 
$$ \xi \wedge n =\liminf_{t\to T} Y^n_t \leq \liminf_{t\to T} Y_t,$$
and thus immediately the singular minimal solution satisfies \eqref{eq:term_cond_super_sol}: a.s.
$$\liminf_{t\to T} Y_t \geq \xi.$$
To summarize, we add a condition on $\bF$ to be sure that the previous inequality holds. This inequality will be used in Section \ref{sect:continuity_Y} to prove continuity of $Y$.

\section{Existence of a left-limit} \label{sect:exist_limit}

In this section Conditions \textbf{(A)} hold and we will prove that the left limit of $Y$ at time $T$ exists provided we know the precise behaviour of the generator w.r.t. $y$. In other words we show that $Y$ is c\`adl\`ag on $[0,T]$. In some sense our generator has to be more specific to control the behaviour of the supersolution at time $T$. 

\noindent \textbf{Assumption (B).} The generator satisfies
\begin{equation}\label{eq:spec_gene} \tag{B}
b_t g(y) \leq f(t,y,z,u)-f(t,0,z,u) , \quad \forall y \geq 0, \ \forall (t,z,u),
\end{equation}
where 
\begin{itemize}
\item $b$ is positive and $b \in L^1((0,T)\times \Omega)$;
\item $g$ is a negative, decreasing and of class $C^1$ function and concave on $\R_+$ with $g(0)<0$ and $g'(0)<0$.  
\end{itemize}
Since Conditions \textbf{(A)} should hold, in particular \eqref{eq:f_upper_bound}, from \textbf{(B)} we deduce that $b_t g(y) \leq -a_ty|y|^q$ for any $t \in [0,T]$ and $y$. Thus w.l.o.g. $g(y) \leq - y|y|^q$ and $ b_t \geq (-1/g(1))a_t = C a_t$ for some positive constant $C$. We can always add to $g$ a linear function like $- y-1$ such that $g(0)< 0$ and $g'(0) < 0$. 

In the sequel of this section, we decompose $f$ as follows:
$$ f(t,y,z,u) = \phi(t,y,z,u) + \pi(t,z,u) + f^0_t $$
where $f^0_t = f(t,0,0,0)$ and 
\begin{eqnarray*}
\phi(t,y,z,u) & = & f(t,y,z,u)-f(t,0,z,u)\\
\pi(t,z,u) & = & f(t,0,z,u)-f(t,0,0,0).
\end{eqnarray*}
\begin{thrm} \label{thm:exists_limit}
Assumptions {\rm \textbf{(A)}} and {\rm \textbf{(B)}} hold. Moreover one of the next three cases holds:
\begin{itemize}
\item {\bf Case 1.} $f$ does not depend on $u$ or $\pi(t,0,u) \geq 0$;
\item {\bf Case 2.} $\vartheta \in \bL^1_\lambda(E)$ and there exists a constant $\kappa_*>-1$ such that $\kappa^{0,0,u,0}_s(e) \geq \kappa_*$ a.e. for any $(s,u,e)$;
\item {\bf Case 3.} $\lambda$ is a finite measure on $E$.
\end{itemize}
Then the minimal supersolution $Y$ has a left limit at time $T$.
\end{thrm}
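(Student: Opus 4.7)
The plan is to apply It\^o's formula to a carefully chosen bounded concave function $G(Y^n)$ whose derivative is tailored to cancel the nonlinearity $g$ in Assumption \textbf{(B)}, derive a semimartingale decomposition whose drift is integrable uniformly in $n$, then pass to the monotone limit $n\to\infty$ and conclude existence of $Y_{T-}$ by inverting $G$.

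Concretely, set
$$G(y) := \int_0^y \frac{-1}{g(s)}\,ds,\qquad y\geq 0.$$
Under \textbf{(B)}, after the normalizations $g(y)\leq -y|y|^q$, $g(0)<0$, $g'(0)<0$ discussed after \eqref{eq:spec_gene}, $G$ is $C^2$, strictly increasing and concave on $[0,\infty)$, bounded by $G(\infty)<\infty$ (by integrability of $s^{-(1+q)}$ at infinity), with $G'$ bounded above by $G'(0)=1/|g(0)|$, $G''\leq 0$, and the crucial identity $G'(y)\,g(y)\equiv -1$. Applying It\^o's formula to $G(Y^n_t)$ on $[t,T]$ (valid since $0\leq Y^n\leq n$), splitting $f_n=\phi+\pi+(f^0\wedge n)$ with $\phi(t,y,z,u)=f(t,y,z,u)-f(t,0,z,u)$, using $G'(y)\,\phi(t,y,z,u)\geq G'(y)\,b_s g(y)=-b_s$, and noting that concavity of $G$ makes the second-order continuous bracket and the jump corrections non-positive, one obtains
$$G(Y^n_t) \leq G(\xi\wedge n) + \int_t^T b_s\,ds + G'(0)\int_t^T\bigl[\,|\pi(s,Z^n_s,U^n_s)| + (f^0_s\wedge n)\,\bigr]ds - \mathcal{N}^n_{t,T},$$
where $\mathcal{N}^n$ is a local martingale. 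The $(f^0\wedge n)$-term is monotone in $n$, and its integral up to $T$ is controlled via Assumption \eqref{eq:alpha_gamma} by a H\"older-type bound.

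The remaining obstacle, and the very reason the three cases are introduced, is the $\pi$-term, which is precisely where the $u$-dependence of $f$ at $y=0$ enters. In Case~1, either $\pi$ does not depend on $u$ or $\pi(t,0,u)\geq 0$; combining with the Lipschitz-in-$z$ inequality \eqref{eq:f_lip_z} reduces matters to controlling $\int G'(Y^n)|Z^n|\,ds$, handled by Cauchy--Schwarz and the $\bH^\ell(0,T-\varepsilon)$ estimate on $Z^n$ derived from \eqref{eq:approx_estim_L2_main_ineq}. In Case~2, the condition $\kappa^{0,0,u,0}\geq\kappa_*>-1$ together with $\vartheta\in\bL^1_\lambda$ permits a Girsanov-type change of measure under which the $u$-contribution to $\pi$ is absorbed into a true uniformly integrable martingale with integrable Radon--Nikod\'ym density. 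In Case~3, the finiteness of $\lambda$ yields $\int_E|u(e)|\,\lambda(de)\leq \lambda(E)^{1/2}\|u\|_{\bL^2_\lambda}$, making the $u$-part of $\pi$ directly integrable against the known $L^2$-estimates on $U^n$.

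Once the $\pi$-term is controlled uniformly in $n$, the semimartingale $G(Y^n_t) + \int_0^t b_s\,ds$ admits a decomposition whose martingale and finite-variation parts are bounded in $n$. Monotone convergence $G(Y^n)\uparrow G(Y)$ combined with standard semimartingale convergence results shows that $G(Y_t) + \int_0^t b_s\,ds$ is c\`adl\`ag on $[0,T]$, and in particular admits a left limit at $T$. Since $G:[0,\infty)\to[0,G(\infty))$ is continuous and strictly increasing, inversion yields $Y_{T-}\in[0,+\infty]$, with $Y_{T-}=+\infty$ exactly when $G(Y_{T-})=G(\infty)$. The main technical difficulty is Case~2, where the Girsanov change of measure must be performed robustly enough to survive the passage to the singular limit $n\to\infty$.
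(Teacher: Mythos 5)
Your function $G$ is just $\Theta(0)-\Theta$ for the paper's $\Theta(x)=\int_x^{\infty}(-1/g)$, so the overall shape (transform $Y^n$ by the antiderivative of $-1/g$, apply It\^o, control the drift, pass to the monotone limit) is the right one. But the execution has genuine gaps. First, a sign problem that hides the real difficulty: for $G$ concave the second-order and jump-correction terms in It\^o's formula satisfy $\tfrac12 G''\,d[Y^n]^c\leq 0$ and $G(y+\Delta)-G(y)-G'(y)\Delta\leq 0$, so after moving them to the other side they give a \emph{lower} bound $G(Y^n_t)\geq G(\xi\wedge n)+\int_t^T G'(Y^n_{s^-})f_n\,ds-\mathcal{N}^n_{t,T}$, not the upper bound you wrote. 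More importantly, a one-sided bound cannot give existence of a left limit: one must exhibit $G(Y_t)$ as a quasimartingale, i.e.\ recognize the correction terms as generating a nonnegative supermartingale part and bound the \emph{negative} contributions of the drift uniformly in $n$ by a potential vanishing at $T$ (the paper's $\E^{\tri_t}\int_t^T b_s\,ds+C(T-t)$).

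Second, your control of the negative contributions fails on $[0,T]$. The $\bH^\ell(0,T-\varepsilon)$ and $\bL^\ell_\mu(0,T-\varepsilon)$ estimates on $Z^n,U^n$ coming from \eqref{eq:approx_estim_L2_main_ineq} blow up as $\varepsilon\to0$ and give nothing new about the behaviour at $T$ (that $Y$ is c\`adl\`ag on $[0,T)$ is already known); so your Case 1 treatment of $\int G'(Y^n)|Z^n|\,ds$ and your Case 3 bound $\int_E|u|\,d\lambda\leq\lambda(E)^{1/2}\|u\|_{\bL^2_\lambda}$ ``against the known $L^2$-estimates on $U^n$'' do not yield bounds uniform in $n$ up to $T$. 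The paper avoids any integrability of $Z^n$ near $T$ by completing the square: $-L(-\Theta')|Z^n|+\tfrac12\Theta''|Z^n|^2\geq L^2/(2g'(0))$, which is exactly where the concavity of $g$ and $g'(0)<0$ in \textbf{(B)} are used; and it handles the $u$-part by coupling $-\Theta'(Y^n_{s^-})\pi(s,0,U^n_s)$ with the jump-correction integral against $\mu$ and minimizing $u\mapsto\Theta(y+u)-\Theta(y)-\Theta'(y)(1+\kappa)u$ pointwise — this coupling is the essential idea behind the three cases and is absent from your argument. Your Girsanov suggestion for Case 2 is not carried out and faces the additional obstacle that the candidate density is built from $\kappa^{0,0,U^n,0}$, hence changes with $n$, so it is unclear how the supermartingale structure would survive the monotone limit.
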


\begin{rmrk}
$\ $
\begin{enumerate}
\item Again this result shows that the process $Y$ is c\`adl\`ag on $[0,T]$ when the filtration $\bF$ is complete and right-continuous. No additional assumption (left-continuity) on the filtration is needed here. 
\item If Inequality \eqref{eq:term_cond_super_sol} holds, then a.s. $\displaystyle \lim_{t\to T} Y_t \geq \xi.$
\item The second condition on $\kappa$ in Case 2 is quite classical. Indeed a stronger version is used to prove the comparison principle for BSDE with jumps in \cite{barl:buck:pard:97} or in \cite{roye:06}.
\end{enumerate}
\end{rmrk}

\begin{enonce}{Back to the examples.}
Clearly Conditions of Theorem \ref{thm:exists_limit} hold for the three examples \ref{ex:control_ex}, \ref{ex:toy_ex} and \ref{ex:power_sing}.  

Indeed for Example \ref{ex:control_ex}, $\lambda$ is supposed to be finite. Moreover since $\psi$ defined by \eqref{eq:generater_control_psi} is Lipschitz w.r.t. $y$ we obtain for $y\geq 0$:
\begin{eqnarray*}
f(t,y,z,u)-f(t,0,z,u) & = & - \frac{y|y|^{q}}{q \alpha_t^{q}} - \psi(t,y,u) +\psi(t,0,u) \geq  - \frac{y|y|^{q}}{q \alpha_t^{q}} - L |y| \\
& \geq & -\left( \frac{1}{q\alpha_t^{q}} \vee L \right) \left( y^{1+q} + y \right) \geq b_t g(y).
\end{eqnarray*}
if
$$b_t = \frac{1}{q\alpha_t^{q}} \vee L ,\qquad g(y) = -y^{1+q}  - y - 1.$$
For the examples \ref{ex:toy_ex} and \ref{ex:power_sing}, $g(y) = -y|y|^q-y-1$ also works. Since $f$ does not depend on $u$, there is no restriction on $\lambda$ or on $\kappa$. 
\end{enonce}

Let us just give the trick of the proof of the previous theorem. If $b_t$ is deterministic, consider the ordinary differential equation $y' = -f(t,y)=-b_t g(y)$. To solve it, we can separate the variables and with $\Theta' = 1/g$, we write formally:
$$\Theta(y(T)) - \Theta(y(t)) = - \int_t^T \frac{y'(s)}{g(y(s))} ds =  -\int_t^T b_s ds$$
which gives:
$$y(t) = \Theta^{-1} \left( \Theta(y(T)) + \int_t^T b_s ds\right).$$
We will follow the same idea: we apply the It\^o formula with the function $\Theta$ to the process $Y_t$. Then we cancel the martingale part with the conditional expectation and we have to control the terms of finite variations. The positive parts will give a non negative supermartingale, which has always a limit at time $T$. The negative parts have to be more carefully studied to prove that they have a limit at time $T$. This is the reason why we impose these extra conditions on $f$, $\kappa$ or $\lambda$.

Now let us go into details and define the function $\Theta$ on $(0,+\infty)$ by
\begin{equation}
\Theta (x) = \int_{x}^{+\infty} \frac{-1}{g(y)} dy .
\end{equation}
Recall that $g$ is continuous and negative on $\R_+$. Thus from the condition $g(y) \leq -y|y|^q$, the function $\Theta : [0,+\infty) \to (0,\Theta(0)]$ is well defined, non increasing, of class $C^{1}$, and bijective. Let $\Xi : (0,\Theta(0)] \to [0,+\infty)$ be the inverse of $\Theta$. Let us give some explicit examples.
\begin{itemize}
\item If $g(y) = - y^2 - 2y -1$, $\Theta (x) = (x+1)^{-1}$ and $\Xi(x) = (1/x)-1$. 
\item If $g(y)=  - \exp (y)$, for $y \geq 0$, $\Theta(x)  = \exp (-x)$ and $\Xi(x) = -\ln(x)$. 
\end{itemize}
We proceed as in \cite{popi:06} (see here for more details) and we apply the function $\Theta$ to the process $Y^n$, where $(Y^n,Z^n,U^n,M^n)$ is the solution of the truncated BSDE \eqref{eq:trunc_BSDE}. 
\begin{lmm} \label{prop:exists_limit}
Assume that the conditions of Theorem \ref{thm:exists_limit} are satisfied. Then the process $Y$ can be written as follows:
$$Y_t=\Theta^{-1} \left(\E^{\tri_t}\left[\Theta(\xi) \right]+ \psi_t^- -\psi^+_t\right)$$
where $\psi^+$ and $\psi^-$ are two non negative c\`adl\`ag  supermartingales with a.s. $\displaystyle \lim_{t\to T} \psi^-_t = 0$. 
\end{lmm}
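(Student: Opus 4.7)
The plan is to apply It\^o's formula with $\Theta$ to the truncated solution $Y^n$, exploit the convexity of $\Theta$ (which holds since $\Theta''(x) = -g'(x)/g(x)^2 \ge 0$ as $g$ is decreasing and $C^1$) to extract a non-decreasing correction process, take $\tri_t$-conditional expectations, and then pass to the limit $n \to \infty$. Observe first that $\Theta$ is bounded by $\Theta(0) < \infty$, since $g(y) \le -y|y|^q$ makes $-1/g$ integrable at $+\infty$; in particular $\Theta(\xi)$ is a bounded random variable, vanishing on $\cS$.

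Applying It\^o's formula to $s \mapsto \Theta(Y^n_s)$ between $t$ and $T$ and inserting the dynamics \eqref{eq:trunc_BSDE} of $Y^n$ yields
\begin{equation*}
\Theta(Y^n_t) = \Theta(\xi \wedge n) + \int_t^T \frac{f_n(s,Y^n_s,Z^n_s,U^n_s)}{g(Y^n_s)}\, ds - \frac{1}{2}\int_t^T \Theta''(Y^n_s)\, d\langle Y^{n,c}\rangle_s - J^n_{t,T} - (\cN^n_T - \cN^n_t),
\end{equation*}
where $\cN^n$ collects the local-martingale integrals against $W$, $\tmu$ and $M^n$, and $J^n_{t,T} = \sum_{t < s \le T}[\Theta(Y^n_s) - \Theta(Y^n_{s-}) - \Theta'(Y^n_{s-})\Delta Y^n_s] \ge 0$ by convexity. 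Decomposing $f_n = \phi + \pi + (f^0 \wedge n)$, the assumptions \eqref{eq:f_upper_bound} and \eqref{eq:spec_gene} give $b\,g(Y^n) \le \phi \le 0$, hence $0 \le \phi/g(Y^n) \le b$; so the ``$\phi$-drift'' is non-negative and bounded above by $\int_t^T b_s\, ds$, while $(f^0 \wedge n)/g(Y^n) \le 0$.

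After a standard localization (all ingredients lie in $\bS^\ell(0,T-\eps)$ and the relevant $\Theta$-quantities are bounded), I would take $\tri_t$-conditional expectations and set
\begin{equation*}
\psi^{n,-}_t := \E^{\tri_t}\!\left[\int_t^T \frac{\phi(s,Y^n,Z^n,U^n)}{g(Y^n_s)}\, ds\right] + (\text{positive part of the } \pi\text{-contribution}),
\end{equation*}
gathering the remaining non-positive terms (the $f^0$- and $\pi$-negative drifts, the quadratic-variation piece, and $J^n_{t,T}$) into $\psi^{n,+}_t$, so that $\Theta(Y^n_t) = \E^{\tri_t}[\Theta(\xi \wedge n)] + \psi^{n,-}_t - \psi^{n,+}_t$. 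Each $\psi^{n,\pm}_t$ is of the form $\E^{\tri_t}[A^{n,\pm}_t]$ with $A^{n,\pm}_t$ non-negative and non-increasing in $t$, hence is a non-negative c\`adl\`ag supermartingale. The three cases of Theorem \ref{thm:exists_limit} intervene precisely to split the sign-indefinite $\pi/g(Y^n)$: Case 1 uses the sign of $\pi(t,0,u)$ and the Lipschitz-in-$z$ control from \eqref{eq:f_lip_z}, Case 2 uses $\vartheta \in \bL^1_\lambda$ and $\kappa \ge \kappa_*>-1$ to get a linear $\bL^1_\lambda$-bound on $\pi(t,0,u)$, Case 3 uses finiteness of $\lambda$; in each case the positive part of $\pi/g(Y^n)$ is dominated by an $L^1$ process that vanishes on $[t,T]$ as $t\uparrow T$.

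To pass to the limit $n \to \infty$, monotone convergence gives $\Theta(Y^n) \downarrow \Theta(Y)$ and $\Theta(\xi \wedge n) \to \Theta(\xi)$ in $L^1$ (as $\Theta$ is bounded), while the convergence of $(Y^n, Z^n, U^n, M^n)$ on $[0,T-\eps]$ from \eqref{eq:approx_estim_L2_main_ineq}, together with the uniform $\Theta$-bounds, lets us apply monotone/dominated convergence to each ingredient of $\psi^{n,\pm}$; this yields $\psi^{n,\pm}_t \to \psi^\pm_t$ with $\psi^\pm$ non-negative c\`adl\`ag supermartingales, and the identity $\Theta(Y_t) = \E^{\tri_t}[\Theta(\xi)] + \psi^-_t - \psi^+_t$. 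Finally $\lim_{t \uparrow T} \psi^-_t = 0$ a.s.\ follows from conditional dominated convergence: the integrand defining $\psi^-$ is bounded by $b_s$ plus a case-dependent $L^1$ process, so $\int_t^T(\cdots)\, ds \to 0$ a.s. I expect the main obstacle to be the analysis of the $\pi$-term --- splitting it so that its positive part can be absorbed into $\psi^-$ \emph{without} spoiling $\psi^-_t \to 0$, and its negative part into $\psi^+$ while preserving positivity and integrability uniformly in $n$; the three alternative hypotheses in the statement are designed to be essentially the weakest conditions under which this splitting can be carried out using only \textbf{(A)} and \textbf{(B)}.
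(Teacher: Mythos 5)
Your overall strategy is the paper's: apply It\^o's formula with $\Theta$ to $Y^n$, use convexity of $\Theta$ to identify non-negative correction terms, condition on $\tri_t$, bound the ``bad'' part uniformly in $n$, and pass to the limit. The set-up, the sign bookkeeping for the $\phi$-drift ($0\le \phi/g(Y^n)\le b$) and the $f^0$-drift, and the final limiting argument all match. But there is a genuine gap at exactly the point you flag as the main obstacle: your proposed splitting sends the \emph{entire} quadratic-variation piece and the \emph{entire} jump-compensator sum $J^n_{t,T}$ into $\psi^{n,+}$, and then claims that the positive part of $\Theta'(Y^n)\pi(s,Z^n_s,U^n_s)$ alone is dominated by an $n$-independent $L^1$ process vanishing near $T$. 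That claim is false. The $z$-part satisfies only $|\Theta'(Y^n_s)(\pi(s,Z^n_s,U^n_s)-\pi(s,0,U^n_s))|\le L|Z^n_s|/|g(Y^n_s)|$, and $\int_t^T|Z^n_s|\,ds$ is not uniformly controlled near $T$ (even Proposition \ref{prop:sharp_estim_Z_U} only weights $|Z^n|^2$ by $(T-s)^\rho$); similarly $|\pi(s,0,U^n_s)|\le\int_E\vartheta(e)|U^n_s(e)|\lambda(de)$ blows up with $n$ near $T$. So with your decomposition, $\psi^{n,-}_t$ is not bounded uniformly in $n$ and its limit at $T$ cannot be shown to vanish --- this breaks the proof in \emph{all three} cases (the $z$-issue is present even in Case 1).

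The missing mechanism, which is the actual content of the paper's proof, is that these sign-indefinite first-order terms must be kept \emph{together} with the second-order terms before any splitting. Concretely: (i) the $z$-part is combined with $\tfrac12\Theta''(Y^n_s)|Z^n_s|^2$ and a completion of the square gives the pointwise lower bound
\begin{equation*}
-\Theta'(Y^n_{s})\bigl(\pi(s,Z^n_s,U^n_s)-\pi(s,0,U^n_s)\bigr)+\tfrac12\Theta''(Y^n_s)|Z^n_s|^2\;\ge\;\frac{L^2}{2g'(0)},
\end{equation*}
a constant (using $g$ concave with $g'(0)<0$); (ii) the $u$-part $-\Theta'(Y^n_{s^-})\pi(s,0,U^n_s)$ is combined, via \eqref{eq:f_jump_comp}, with the non-negative $\mu$-compensator $\int_E[\Theta(Y^n_{s^-}+U^n_s(e))-\Theta(Y^n_{s^-})-\Theta'(Y^n_{s^-})U^n_s(e)]\lambda(de)$, reducing the problem to minimizing $u\mapsto\Theta(y+u)-\Theta(y)-\Theta'(y)(1+\kappa)u$ over $u>-y$; it is this minimization (the minimizer solves $g(y+u^*)=g(y)/(1+\kappa)$) that the three case hypotheses are designed to control, yielding a lower bound by a constant times $\vartheta(e)$ or $\lambda(E)$. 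Only after these combinations is the integrand bounded below by $-(C+b_s)$, which gives $(\psi^n_t)^-\le C(T-t)+\E^{\tri_t}\int_t^Tb_s\,ds$ uniformly in $n$. Without steps (i) and (ii) your $\psi^{n,-}$ is not controllable, so the proposal as written does not close.
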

\begin{proof}
Since $Y^n_t$ is bounded from below by zero, we can apply It\^o's formula:
\begin{eqnarray} \label{eq:Ito_form_Theta} 
&& \Theta(Y^n_t) = \Theta(\xi\wedge n) + \int_t^T \Theta'(Y^n_{s^-}) f_n(s,Y^n_s,Z^n_s,U^n_s) ds \\ \nonumber
&&\quad  -  \int_t^T \Theta'(Y^n_{s^-}) Z^n_s dW_s - \int_t^T  \Theta'(Y^n_{s^-})  \int_E U^n_s(e) \tilde{\mu}(de,ds)- \int_t^T  \Theta'(Y^n_{s^-})  dM^n_s \\ \nonumber
&&\quad  - \frac{1}{2} \int_t^T \Theta''(Y^n_{s^-}) |Z^n_s|^2 ds - \frac{1}{2} \int_t^T \Theta''(Y^n_{s^-}) d[M^n]^c_s \\ \nonumber
&&\quad  -  \int_t^T\int_E\left[\Theta(Y^n_{s^-} + U^n_s(e))-\Theta(Y^n_{s_-})-\Theta'(Y^n_{s_-})U^n_s(e)\right]\mu(ds,de) \\ \nonumber
&&\quad  -  \sum_{t<s \leq T} \left[\Theta(Y^n_{s^-}+ \Delta M^n_s)-\Theta(Y^n_{s_-})-\Theta'(Y^n_{s_-})\Delta M^n_s \right] \\ \nonumber
&& = \E^{\tri_t} \Theta(\xi\wedge n) - \psi^n_t
\end{eqnarray}
where 
\begin{eqnarray*}
\psi^n_t & = &-\E^{\tri_t}  \int_t^T \Theta'(Y^n_{s}) f_n(s,Y^n_s,Z^n_s,U^n_s) ds +\frac{1}{2} \E^{\tri_t} \int_t^T \Theta''(Y^n_s) |Z^n_s|^2 ds \\
& + &\frac{1}{2}\E^{\tri_t}  \int_t^T \Theta''(Y^n_s) d[M^n]^c_s  + \E^{\tri_t} \sum_{t<s \leq T} \left[\Theta(Y^n_{s^-}+ \Delta M^n_s)-\Theta(Y^n_{s_-})-\Theta'(Y^n_{s_-})\Delta M^n_s \right]\\
&+ & \E^{\tri_t}\int_t^T \int_E\left[\Theta(Y^n_{s^-} + U^n_s(e))-\Theta(Y^n_{s_-})-\Theta'(Y^n_{s_-})U^n_s(e)\right]\mu(ds,de) .
\end{eqnarray*}
Since $\Theta$ is non increasing, we estimate now the following difference for $m\geq n$:
\begin{eqnarray*}
0 \leq \Theta(Y^n_t)-\Theta(Y^m_t) & =& \E^{\tri_t}\left[\Theta(\xi\wedge n) -\Theta(\xi\wedge m)\right] -(\psi^n_t - \psi^m_t).
\end{eqnarray*}
And so we obtain:
$$|\psi^n_t-\psi^m_t|\leq \E^{\tri_t}\left[\Theta(\xi\wedge n) -\Theta(\xi\wedge m) \right]\vee \left[ \Theta(Y^n_t)-\Theta(Y^m_t)\right].$$
Since the sequences $(\E^{\tri_t}\left[\Theta(\xi\wedge n)\right])_{n\geq1}$ and $(\Theta(Y^n_t))_{n\geq 1}$ converge a.s. and in $\mathbb{L}^1$ (by monotone convergence theorem), we deduce that $(\psi^n_t)_{n\geq1}$ converge a.s. and in $\mathbb{L}^1$ to some $\psi_t$. So by passing to the limit, one can write:
\begin{equation} \label{eq:explicit_expr_Y}
\Theta(Y_t)=\E^{\tri_t}[\Theta(\xi)] -\psi_t .
\end{equation}
Our aim now is to prove that the negative part of $\psi^n_t$ is bounded with an upper bound independent of $n$. 

Let us recall the decomposition of the generator $f_n$:
\begin{eqnarray*}
f_n(s,y,z,u) & = & \left[f(s,y,z,u)-f(s,0,z,u)\right] + \left[f(s,0,z,u)-f(s,0,0,0)\right] +  (f^0_s \wedge n)\\
& =& \phi(s,y,z,u) + \pi(s,z,u) +  (f^0_s \wedge n).
\end{eqnarray*}
Recall that 
\begin{eqnarray}\label{eq:martingale_to_control_1} 
\psi^n_t & = &-\E^{\tri_t}  \int_t^T \Theta'(Y^n_{s}) \phi(s,Y^n_s,Z^n_s,U^n_s) ds \\ \nonumber
&+ & \E^{\tri_t}\int_t^T \int_E\left[\Theta(Y^n_{s^-} + U^n_s(e))-\Theta(Y^n_{s_-})-\Theta'(Y^n_{s_-})U^n_s(e)\right]\mu(ds,de) \\ \nonumber
&-& \E^{\tri_t}  \int_t^T \Theta'(Y^n_{s^-}) \pi(s,Z^n_s,U^n_s) ds +  \frac{1}{2} \E^{\tri_t} \int_t^T \Theta''(Y^n_s) |Z^n_s|^2 ds \\ \nonumber
&-& \E^{\tri_t}  \int_t^T \Theta'(Y^n_s) (f^0_s \wedge n) ds  +\frac{1}{2}\E^{\tri_t}  \int_t^T \Theta''(Y^n_s) d[M^n]^c_s  \\ \nonumber
& + &  \E^{\tri_t} \sum_{t<s \leq T} \left[\Theta(Y^n_{s^-}+ \Delta M^n_s)-\Theta(Y^n_{s_-})-\Theta'(Y^n_{s_-})\Delta M^n_s \right].
\end{eqnarray}
Recall that $\Theta' = 1/g < 0$ and $\Theta'' = -g'/g^2 \geq 0$. Hence $\Theta$ is non decreasing and convex and
$$\frac{1}{2}\E^{\tri_t}  \int_t^T \Theta''(Y^n_s) d[M^n]^c_s  + \E^{\tri_t} \sum_{t<s \leq T} \left[\Theta(Y^n_{s^-}+ \Delta M^n_s)-\Theta(Y^n_{s_-})-\Theta'(Y^n_{s_-})\Delta M^n_s \right]\geq 0$$
and
$$- \E^{\tri_t}  \int_t^T \Theta'(Y^n_s) (f^0_s \wedge n) ds \geq 0.$$
The last three terms in \eqref{eq:martingale_to_control_1} are non negative. 

Then we control the first term using Condition \textbf{(B)}:
\begin{equation}\label{eq:estim_neg_part_0}
-\Theta'(Y^n_{s}) \phi(s,Y^n_s,Z^n_s,U^n_s) \geq - \Theta'(Y^n_{s}) b_s g(Y^n_s) = -b_s.
\end{equation}
For the remaining terms in \eqref{eq:martingale_to_control_1} we have
\begin{eqnarray*}
\Theta'(y) \pi(t,z,u)& =& \Theta'(y)(\pi(t,z,u)-\pi(t,0,u))+ \Theta'(y)\pi(t,0,u).
\end{eqnarray*}
Hence the terms containing $Z^n$ are:
\begin{eqnarray*}
&& -\Theta'(Y^n_{s^-}) (\pi(s,Z^n_s,U^n_s)-\pi(s,0,U^n_s)) + \frac{1}{2} \Theta''(Y^n_s)|Z^n_s|^2 \\
&&\quad \geq -L (-\Theta'(Y^n_s)) |Z^n_s| + \frac{1}{2} \Theta''(Y^n_s)|Z^n_s|^2= \frac{-g'(Y^n_s)}{2} \frac{|Z^n_s|^2}{|g(Y^n_s)|^2} + L \frac{|Z^n_s|}{g(Y^n_s)} \\
&& \quad = \frac{-g'(Y^n_s)}{2} \left( \frac{|Z^n_s|}{g(Y^n_s)} -\frac{L}{g'(Y^n_s)} \right)^2  + \frac{L^2}{2g'(Y^n_s)} \geq \frac{L^2}{2g'(Y^n_s)} .
\end{eqnarray*}
We have used  Condition \eqref{eq:f_lip_z} such that:
$$\pi(s,Z^n_s,U^n_s)-\pi(s,0,U^n_s) \geq -L |Z^n_s|.$$
Since $g$ is concave, $g'$ is non increasing. Thus we have $g'(Y^n_s) \leq g'(0)$ and
\begin{equation} \label{eq:estim_neg_part}
-\Theta'(Y^n_{s^-}) (\pi(s,Z^n_s,U^n_s)-\pi(s,0,U^n_s)) + \frac{1}{2} \Theta''(Y^n_s)|Z^n_s|^2 \geq \frac{L^2}{2g'(0)}.
\end{equation}
Thereby in \eqref{eq:martingale_to_control_1} the last term to control is:
\begin{eqnarray*} \nonumber
&& \E^{\tri_t}\int_t^T \int_E\left[\Theta(Y^n_{s^-} + U^n_s(e))-\Theta(Y^n_{s_-})-\Theta'(Y^n_{s_-})U^n_s(e)\right]\mu(ds,de) \\
&&\qquad  - \E^{\tri_t}  \int_t^T \Theta'(Y^n_{s^-}) \pi(s,0,U^n_s) ds .
\end{eqnarray*}

\begin{itemize}
\item \noindent \textbf{Case 1:} Assume that $f$ does not depend on $u$ or $\pi(s,0,u)\geq 0$ for any $s$ and $u$. From the convexity of $\Theta$, the integral w.r.t. $\mu$ is non negative. Hence using \eqref{eq:estim_neg_part_0} and \eqref{eq:estim_neg_part}, the negative part of $\psi^n$ is controlled for any $n$ by:
\end{itemize}
\begin{equation} \label{eq:control_neg_part_1}
(\psi^{n}_t)^- \leq -  \frac{L^2}{2g'(0)} (T-t) + \E^{\tri_t}\int_t^T b_s ds.
\end{equation}
Let us deal now with the cases 2 and 3. Up to some localization procedure we have
\begin{eqnarray*}
&& \E^{\tri_t}\int_t^T \int_E\left[\Theta(Y^n_{s^-} + U^n_s(e))-\Theta(Y^n_{s_-})-\Theta'(Y^n_{s_-})U^n_s(e)\right]\mu(ds,de) \\
&& \quad =  \E^{\tri_t}\int_t^T \int_E\left[\Theta(Y^n_{s^-} + U^n_s(e))-\Theta(Y^n_{s_-})-\Theta'(Y^n_{s_-})U^n_s(e)\right]\lambda(de) ds.
\end{eqnarray*}
With Assumption \eqref{eq:f_jump_comp} we obtain:
$$ -\Theta'(Y^n_{s^-}) \pi(s,0,U^n_s) \geq  - \Theta'(Y^n_{s^-}) \int_E \kappa^{0,0,U^n,0}_s U^n_s(e) \lambda(de).$$
For simplicity, $\kappa^{0,0,U^n,0}_s(e)$ will be denoted by $\kappa^n_s(e)$. 
The jump part is bounded from below by the following process
\begin{eqnarray*}
\E^{\tri_t}\int_t^T \int_E\left[\Theta(Y^n_{s^-} + U^n_s(e))-\Theta(Y^n_{s_-})-\Theta'(Y^n_{s_-})(1+\kappa^{n}_s(e))U^n_s(e)\right]\lambda(de) ds.
\end{eqnarray*}
From Condition \eqref{eq:f_jump_comp}, $1+\kappa^{n}_s(e) \geq 0$. Thus for a fixed $y>0$, if $\kappa^{n}_s(e) > -1$, the function
$$u \mapsto \Theta(y+u)-\Theta(y)-\Theta'(y)(1+\kappa^{n}_s(e))u$$
has a minimum $m$ on $(-y,+\infty)$ at the point $u^*$ satisfying 
$$\Theta'(y+u^*)=\Theta'(y)(1+\kappa^{n}_s(e)) \Longleftrightarrow g(y+u^*) = \frac{g(y)}{1+\kappa^{n}_s(e)}.$$
In other words:
$$u^* = -y + g^{-1} \left( \frac{g(y)}{1+\kappa^{n}_s(e)}\right) = g^{-1}  \left( \frac{g(y)}{1+\kappa^{n}_s(e)}\right) - y.$$
This minimum is equal to
$$m=- (u^*)^2 \int_0^1 \rho \Theta''(y+ \rho u^*) d\rho \leq 0.$$
If $\kappa^n_s (e) = -1$, then $u^*=+\infty$ and $m=-\Theta(y)$. In any case $\kappa^n_s (e) u^*  \leq 0$.  

\vspace{0.3cm}
\begin{itemize}
\item \noindent \textbf{Case 2:} $\vartheta \in \bL^1_\lambda(E)$ and $\kappa^{y,z,u,v}_t(e)$ is bounded from below by some constant $\kappa_* >-1$ uniformly w.r.t. all parameters. 

By convexity of $\Theta$, we obtain
\begin{eqnarray*}
&& \Theta(y+u^*)-\Theta(y)-\Theta'(y)(1+\kappa^{n}_s(e))u^* \geq -\Theta'(y)\kappa^{n}_s(e)u^*.
\end{eqnarray*}
If $\kappa^{n}_s(e)\geq 0$, 
$$ -\Theta'(y)\kappa^{n}_s(e)u^* \geq y\Theta'(y) \kappa^{n}_s(e) \geq y\Theta'(y) \vartheta(e)$$
and if $\kappa_* \leq \kappa^{n}_s(e)\leq 0$
\begin{eqnarray*}
-\Theta'(y)\kappa^{n}_s(e)u^* & \geq & -\Theta'(y) \kappa^{n}_s(e)g^{-1} \left( \frac{g(y)}{1+\kappa^{n}_s(e)}\right) =\Theta'(y)|\kappa^{n}_s(e)| g^{-1} \left( \frac{g(y)}{1+\kappa^{n}_s(e)}\right) \\
& \geq & \Theta'(y)|\kappa^{n}_s(e)| g^{-1} \left( \frac{g(y)}{1+\kappa_*}\right) \geq \frac{1}{g(y)} g^{-1} \left( \frac{g(y)}{1+\kappa_*}\right) \vartheta(e) .
\end{eqnarray*}
From our assumption on $g$, the functions $y\Theta'(y)=y/g(y)$ and $\frac{1}{g(y)} g^{-1} \left( \frac{g(y)}{1+\kappa_*}\right) $ are non positive and bounded from above respectively by a constant $-K_g<0$ depending on $g$ and by $-K_{g,\kappa_*}< 0$ depending only on $g$ and $\kappa_*$. This last estimate and the inequalities \eqref{eq:estim_neg_part_0} and \eqref{eq:estim_neg_part} imply that 
\end{itemize}

\begin{equation} \label{eq:control_neg_part_2}
(\psi^{n}_t)^- \leq \left[- \frac{L^2}{2g'(0)}  + \|\vartheta\|_{L^1_\lambda} (K_g \vee K_{g,\kappa_*}) \right] (T-t) + \E^{\tri_t}\int_t^T b_s ds.
\end{equation}

\vspace{0.3cm}
\begin{itemize}
\item \noindent \textbf{Case 3:} $\lambda$ is a finite measure.

Since $u^* \geq -y$, then 
\begin{eqnarray*}
&& \Theta(y+u^*)-\Theta(y)-\Theta'(y)(1+\kappa^{n}_s(e))u^* \geq -\Theta(y)+\Theta'(y)(1+\kappa^{n}_s(e))y\\
&&\qquad  \geq -\Theta(y) + \frac{y}{g(y)} (1+\vartheta(e)).
\end{eqnarray*}
Since $-\Theta$ is non decreasing and since $Y^n$ is bounded from below by $0$, the inequalities  \eqref{eq:estim_neg_part_0}, \eqref{eq:estim_neg_part} and the assumption \eqref{eq:f_growth_psi} imply that 
\end{itemize}
\begin{eqnarray}\label{eq:control_neg_part_3}
(\psi^{n}_t)^- &\leq& \E^{\tri_t}\int_t^T b_s ds + (T-t) \left[\frac{-L^2}{2g'(0)} + \lambda(E) ( \Theta(0) + K_g) +\|\vartheta\|_{L^1_\lambda} K_g\right].
\end{eqnarray}
In the three cases \eqref{eq:control_neg_part_1}, \eqref{eq:control_neg_part_2} or \eqref{eq:control_neg_part_3}, the negative part of $\psi^n$ is bounded uniformly w.r.t. $n$ and since $b\in L^1((0,T)\times \Omega)$ the right-hand side of the three estimates goes to zero as $t$ tends to $T$.

Let us now conclude. Recall that $\psi^n$ converges to $\psi$ and \eqref{eq:explicit_expr_Y} holds. The estimates \eqref{eq:control_neg_part_1}, \eqref{eq:control_neg_part_2} or \eqref{eq:control_neg_part_3} show that the negative part of $\psi^n$ converges a.s. and in $L^1$ to the non negative c\`adl\`ag bounded supermartingale $\psi^-$. Moreover the limit of $\psi^-_t$ at time $T$ is equal to zero. The equation \eqref{eq:explicit_expr_Y} shows that the positive part of $\psi$ is c\`adl\`ag and a supermartingale by convergence of $(\psi^n)^+$. This achieves the proof of the proposition.
\end{proof}

Theorem \ref{thm:exists_limit} can be now proved immediately. $\psi^+$ being a non negative c\`adl\`ag supermartingale, we can deduce the existence of the following limit:
$$\psi_{T_-}^+:=\lim_{t\nearrow T}\psi^+_t$$
And so $Y_{T_-}$ exists and is equal to:
$$Y_{T_-}:=\lim_{t\nearrow T} Y_t= \Theta^{-1}\left( \Theta(\xi) -\psi_{T_-}^+ \right).$$
Let us remark that in the extreme case where $\kappa_s(e)=-1$, then $m = -\Theta(y)$ for $u=+\infty$ and thus the condition $\lambda(E) < +\infty$ is an almost necessary condition to ensure that the negative part of $\psi^n$ is finite.

\section{Continuity at time $T$} \label{sect:continuity_Y}

The second important result is the proof of Equality \eqref{eq:equality_liminf_T}. Here we deal with a general filtration $\bF$ and two singularities: one due to $\xi$, another due to the generator $f$. As mentioned in Section \ref{ssect:filtration}, 
the filtration satisfies a condition to ensure that \eqref{eq:term_cond_super_sol} holds a.s.
$$\liminf_{t\to T} Y_t \geq \xi.$$

\subsection{Singularity of the generator}

Recall that the generator $f$ of the BSDE \eqref{eq:BSDE_jumps} can be {\it singular} in the sense that Condition \eqref{eq:alpha_gamma} implies 
$$\E \int_0^T (T-s)^{\ell p} (f^0_s)^\ell ds < +\infty.$$
Thus $f^0 \in L^1((0,T-\eps)\times \Omega)$ for any $\eps > 0$, but we could have $f^0_T=+\infty$ and/or $f^0 \not\in L^1((0,T)\times \Omega)$. In Example \ref{ex:power_sing} $f^0_t = (T-t)^{-\varpi}$
with $1 \leq \ell$ and $\varpi < 1 + 1/q + 1/\ell$. Hence for $\varpi \geq 1$, then $f^0 \not\in L^1((0,T)\times \Omega)$. The next result shows that Equality \eqref{eq:equality_liminf_T}
$$\liminf_{t\to T} Y_t = \xi$$
may be false.
\begin{lmm} \label{lem:counter_example}
Assume that the generator is given by: $f(t,y,z,u)=f(t,y) = -y|y|^q + f^0_t$ with $f^0$ deterministic and not in $L^1(0,1)$. Then a.s. $\displaystyle \lim_{t\to T} Y_t = +\infty$.
\end{lmm}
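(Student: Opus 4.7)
I would argue by contradiction, assuming $\Prb(Y_{T^-} < \infty) > 0$. First, the hypotheses place us in Case~1 of Theorem \ref{thm:exists_limit} (no dependence on $u$), and Condition \textbf{(B)} is satisfied with $b_t = 1$ and $g(y) = -y^{1+q} - y - 1$, so the left limit $L := Y_{T^-}$ exists a.s. Since $Y$ is \cad on $[0,T)$, we have $\{L < \infty\} = \{\sup_{[0,T)} Y < \infty\}$; if this event had positive probability, then some $K$ would satisfy $\Prb(B_K) > 0$ for $B_K := \{\sup_{[0,T)} Y \leq K\}$. The stopping time
$$\tau_K := \inf\{s \in [0,T) : Y_s > K\} \wedge T$$
satisfies $B_K \subset \{\tau_K > r\}$, so $\Prb(\tau_K > r) \geq \Prb(B_K) > 0$ for every $r \in [0,T)$.

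For $0 \leq s < t < T$, I would stop the BSDE \eqref{eq:bsde_dynamic} at $\tau_K \wedge t$. Since $(Y,Z,U,M) \in \bS^\ell(0,t)$, the stopped local martingales are true martingales (by BDG applied to $Z \in \bH^\ell$, $U \in \bL^\ell_\mu$, $M \in \bM^\ell$), so taking expectations gives
$$\E[Y_{s \wedge \tau_K}] = \E[Y_{t \wedge \tau_K}] + \E \int_{s \wedge \tau_K}^{t \wedge \tau_K} f^0_r\, dr - \E \int_{s \wedge \tau_K}^{t \wedge \tau_K} Y_r^{1+q}\, dr.$$
On the stopped interval $Y_r \leq K$ Lebesgue-almost everywhere, so the last term is at most $K^{1+q}(t-s)$. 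Rewriting the middle term as $\int_s^t f^0_r \Prb(\tau_K > r)\, dr$ (Fubini, using the determinism of $f^0$) and bounding $\Prb(\tau_K > r) \geq \Prb(B_K)$, together with $\E[Y_{t\wedge\tau_K}] \geq 0$, would yield
$$\E[Y_{s \wedge \tau_K}] \geq \Prb(B_K) \int_s^t f^0_r\, dr - K^{1+q}(t-s).$$

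Finally I would let $t \nearrow T$. The assumption $f^0 \notin L^1(0,T)$ combined with $f^0 \in L^1(0, T-\varepsilon)$ for every $\varepsilon > 0$ (which is forced by \eqref{eq:alpha_gamma} applied to a deterministic $f^0$, so the singularity is concentrated at $T$) gives $\int_s^t f^0_r\, dr \to +\infty$. The right-hand side thus diverges to $+\infty$, while the left-hand side remains bounded by $\E[\sup_{r \leq s} Y_r] < +\infty$ since $Y \in \bD^\ell(0,s)$. This contradiction forces $\Prb(B_K) = 0$ for every $K$, i.e. $Y_{T^-} = +\infty$ almost surely, which is the desired conclusion. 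The only real technical point is the justification that each stopped martingale has zero expectation; this is routine thanks to the $\bS^\ell$-regularity on $[0,t]$, but it is worth checking that the orthogonal martingale $M$ stopped at $\tau_K \wedge t$ is uniformly integrable, which is immediate from $M \in \bM^\ell(0,t)$.
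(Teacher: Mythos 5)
Your proof is correct, but it takes a genuinely different route from the paper's. The paper argues through the truncated solutions $(Y^n,Z^n,U^n,M^n)$ and linearizes the drift with the integrating factor $R^n_t=\exp\left(-\int_0^t|Y^n_r|^q\,dr\right)$: It\^o's formula yields
$$R^n_tY^n_t=\E\left[R^n_T(\xi\wedge n)+\int_t^TR^n_s\left(f^0_s\wedge n\right)ds\,\Big|\,\tri_t\right],$$
and dropping the non negative terminal term, letting $n\to\infty$ via Fatou and using that $f^0$ is deterministic gives the explicit pointwise bound
$$Y_t\ \geq\ \E\left[\exp\left(-\int_t^T|Y_r|^q\,dr\right)\Big|\,\tri_t\right]\int_t^Tf^0_s\,ds\,;$$
since $Y$ is c\`adl\`ag on $[0,T]$ by Theorem \ref{thm:exists_limit}, a finite left limit would make the conditional expectation strictly positive while $\int_t^Tf^0_s\,ds=+\infty$, contradicting the a priori finiteness of $Y_t$ for $t<T$. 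You instead work directly with the limit dynamics \eqref{eq:bsde_dynamic} on $[0,t]$, localize with $\tau_K$, and play the bounded quantity $\E[Y_{s\wedge\tau_K}]$ against the divergent term $\Prb(B_K)\int_s^tf^0_r\,dr$. Both arguments exploit the determinism of $f^0$ at the same spot (to lower-bound the expected contribution of $f^0$ by a deterministic divergent integral), and both invoke Theorem \ref{thm:exists_limit} to turn the conclusion into a statement about the left limit at $T$. What the paper's computation buys is a quantitative conditional lower bound on $Y_t$ at every $t<T$; what yours buys is the avoidance of the exponential change of variable and of the passage to the limit in $n$, at the price of the localization, whose ingredients (optional stopping for the $\bS^\ell(0,t)$-martingales, the Lebesgue-a.e. bound $Y_r\leq K$ before $\tau_K$, and the inclusion $\{Y_{T^-}<\infty\}\subset\bigcup_KB_K$ coming from the c\`adl\`ag property) you justify correctly.
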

\begin{proof}
Recall that $(Y^n,Z^n,U^n,M^n)$ is solution of BSDE \eqref{eq:trunc_BSDE}
\begin{eqnarray*}
Y^n_t & = & \xi\wedge n - \int_t^T Y^n_s|Y^n_s|^q ds  + \int_t^T \left(f^0_s \wedge n \right) ds \\
& - & \int_t^T Z^n_s dW_s - \int_t^T\int_E U^n_s(e)\tilde{\mu}(ds,de) - (M^n_T-M^n_t).
\end{eqnarray*}
We define $R^n_t = \exp\left(-\int_0^t |Y^n_r|^q dr \right)$ and by It\^o formula:
$$R^n_t Y^n_t = \E \left[ R^n_T (\xi\wedge n) + \int_t^T R^n_s \left(f^0_s \wedge n \right) ds \bigg| \tri_t \right].$$
Hence we obtain
$$Y^n_t \geq \E \left[ \int_t^T \exp\left(-\int_t^s |Y^n_r|^q dr \right) \left(f^{0}_s \wedge n \right) ds \bigg| \tri_t \right] .$$
Since $Y^n\leq Y$ for any $n$, 
$$Y_t \geq Y^n_t \geq \E \left[ \int_t^T \exp\left(-\int_t^s |Y_r|^q dr \right) \left( f^0_s \wedge n \right) ds \bigg| \tri_t \right] .$$
Finally using Fatou lemma and since $f^0$ is deterministic, we have
$$Y_t \geq \int_t^T \E \left[  \exp\left(-\int_t^s |Y_r|^q dr \right) \bigg| \tri_t \right] f^0_s ds \geq \E \left[  \exp\left(-\int_t^T |Y_r|^q dr \right) \bigg| \tri_t \right]\int_t^T f^0_s ds .$$
From Theorem \ref{thm:exists_limit}, $Y$ is c\`adl\`ag on $[0,T]$. Hence $Y_t$ is finite a.s. if and only if $\lim_{t\to T} Y_t = +\infty$ a.s.
\end{proof}

Again for Example \ref{ex:power_sing} with $\varsigma=0$ and $\varpi\geq 1$, Equality \eqref{eq:equality_liminf_T} can not be true whatever the terminal condition $\xi$ is. Hence in the rest of this section, we will assume that 
\begin{equation}\label{eq:f0_int} \tag{A8}
f^0 \in L^1((0,T)\times \Omega).
\end{equation}

\subsection{Behaviour of $Y$}

Again we now assume that under suitable assumptions on the filtration $\bF$, \eqref{eq:term_cond_super_sol} holds a.s. and we want to prove that the inequality is an equality. As explained in the introduction, in \cite{popi:06} we were able to prove this in the Brownian setting:
\begin{itemize}
\item When $q>2$ without additional conditions since we have a suitable control of $Z$.
\item When $q\leq 2$ but with Malliavin calculus: roughly speaking $Z$ is the Malliavin derivative of $Y$ and we use the integration by parts to remove $Z$ (Lemma 1.2.2 in \cite{nual:06}). 
\end{itemize}
In our general setting, we deal here only with the first case. Hence we need some estimate on $Z$ and $U$, which will be obtained if 
we strengthen Assumption \eqref{eq:alpha_gamma}: for some $\eta < 1$
\begin{equation}\label{eq:alpha_gamma_2}\tag{A6*}
\E \int_0^T (T-s)^{-1+\eta} \left[ \left(\frac{1}{qa_s}\right)^{1/q} + (T-s)^{1+1/q} f^0_s \right]^{\ell}ds < +\infty.
\end{equation}
If $f$ satisfies all conditions \eqref{eq:f_mono} to \eqref{eq:f_growth_psi}, with \eqref{eq:alpha_gamma_2} instead of \eqref{eq:alpha_gamma}, we say that $f$ satisfies \textbf{Conditions (A*)}. 
\begin{rmrk}[on Assumption \eqref{eq:alpha_gamma_2}]
$\ $ 
\begin{itemize}
\item Since $a\in L^1((0,T)\times \Omega)$, \eqref{eq:alpha_gamma_2} implies that $\eta + \ell/q > 0$.
\item If $f^0$ and $(1/a)^{1/q}$ are in $L^\ell ((0,T)\times \Omega)$, then \eqref{eq:alpha_gamma_2} holds for any $0 < \eta< 1$. The case $a$ bounded from below by a positive constant and $f^0$ bounded from above is a particular case (see Example \ref{ex:toy_ex}).
\item For Example \ref{ex:power_sing}, $a_t = (T-t)^\varsigma$ and $f^0_t = (T-t)^{-\varpi}$ with $-1 < \varsigma < q$, $1 < \ell < q/\varsigma$ and $\varpi <1+1/q+1/\ell$. Condition \eqref{eq:alpha_gamma_2} holds if we take $\eta$ such that 
$$\ell \max \left( \frac{\varsigma}{q}, -(1+1/q-\varpi) \right) < \eta < 1.$$
\end{itemize}
\end{rmrk}

The stronger condition \eqref{eq:alpha_gamma_2} implies the next result.
\begin{prpstn} \label{prop:sharp_estim_Z_U}
Under Conditions {\bf (A*)}, there exists a constant $C$ independent of $n$ such that the process $(Z^n,U^n)$ satisfies:
$$\E\left[ \int_0^T (T-s)^{\rho} \left( |Z^n_s|^2 + \|U^n_s\|^2_{\bL^2_\lambda} \right) ds  \right]^{\ell/2} \leq C.$$
The constant $\rho$ is given by:
\begin{equation}\label{eq:def_rho} 
\rho =\frac{2}{q} + 2\left( 1 -\frac{1}{\ell} \right) + \frac{2\eta}{\ell} .
\end{equation}
\end{prpstn}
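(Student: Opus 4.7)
I would apply It\^o's formula to the semimartingale $(T-s)^\rho(Y^n_s)^2$ on $[0,T]$. A direct check from the formula $\rho = 2p + 2(\eta-1)/\ell$ (with $p = 1+1/q$) together with $\ell p > 1 > 1-\eta$ shows $\rho > 0$, so the boundary term at $s = T$ vanishes since $Y^n_T = \xi\wedge n$ is bounded. Splitting the jump bracket via $\int_E|U^n_s(e)|^2\mu = \|U^n_s\|_{\bL^2_\la}^2\,ds + \int_E|U^n_s(e)|^2\tilde\mu$ to isolate predictable from local-martingale contributions, and localizing as needed, we arrive at
\[
A^n_T + \rho\int_0^T(T-s)^{\rho-1}(Y^n_s)^2\,ds = T^\rho(Y^n_0)^2 + 2\int_0^T(T-s)^\rho Y^n_s f_n(s,Y^n_s,Z^n_s,U^n_s)\,ds + \mathcal{N}^n_T,
\]
where $A^n_T := \int_0^T(T-s)^\rho[|Z^n_s|^2 + \|U^n_s\|^2_{\bL^2_\la}]\,ds + \int_0^T(T-s)^\rho d[M^n]_s$ is the target quantity and $\mathcal{N}^n_T$ is a local martingale collecting the stochastic integrals against $W$, $\tilde\mu$ and $M^n$.

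From \eqref{eq:f_upper_bound}, \eqref{eq:f_lip_z}, and the Lipschitz-in-$u$ consequence of \eqref{eq:f_jump_comp}, together with $Y^n_s\ge 0$,
\[
Y^n_s\,f_n \le -a_s(Y^n_s)^{2+q} + L\,Y^n_s|Z^n_s| + \|\vartheta\|_{\bL^2_\la} Y^n_s\|U^n_s\|_{\bL^2_\la} + Y^n_s(f^0_s\wedge n).
\]
Drop the non-positive dissipation and apply Young's inequality to absorb a fraction of $A^n_T$ from the $|Z^n|$ and $\|U^n\|$ cross products. Then raise both sides to the power $\ell/2$, take expectation, and invoke BDG on $\mathcal{N}^n_T$: its predictable bracket is bounded (after absorbing the $|U^n|^2\tilde\mu$-piece via Kunita's inequality) by $\sup_{s}[(T-s)^\rho(Y^n_s)^2]\cdot A^n_T$, up to a multiplicative constant. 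A second Young step with conjugate exponents $2,2$ gives
\[
\E|\mathcal{N}^n_T|^{\ell/2} \le \epsilon\,\E[A^n_T]^{\ell/2} + C_\epsilon\,\E\Big[\sup_{s\in[0,T]}(T-s)^\rho(Y^n_s)^2\Big]^{\ell/2},
\]
and the first term is absorbed on the left for $\epsilon$ small.

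It remains to bound, uniformly in $n$, the three quantities $\E[\int_0^T(T-s)^\rho(Y^n_s)^2\,ds]^{\ell/2}$, $\E[\int_0^T(T-s)^\rho Y^n_s(f^0_s\wedge n)\,ds]^{\ell/2}$, and $\E[\sup_{s}(T-s)^\rho(Y^n_s)^2]^{\ell/2}$. The a priori estimate \eqref{eq:a_priori_estimate} yields $(Y^n_s)^\ell \le K^\ell(T-s)^{-\ell p}G_s$ with $G_s = \E[\int_s^T H_r^\ell\,dr\,|\,\tri_s]$ and $H_r = (1/qa_r)^{1/q} + (T-r)^p f^0_r$; raising to the power $2/\ell$ gives $(Y^n_s)^2 \le K^2(T-s)^{-2p}G_s^{2/\ell}$. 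The choice $\rho - 2p = 2(\eta-1)/\ell$ is tuned so that, after taking the $\ell/2$-th moment, the surviving $s$-weight is exactly the integrable $(T-s)^{-1+\eta}$ appearing in \eqref{eq:alpha_gamma_2}: for the supremum term one uses $(T-s)^{\eta-1}G_s \le \E[\int_0^T(T-r)^{\eta-1}H_r^\ell\,dr\,|\,\tri_s]$ (obtained from $(T-r)^{1-\eta}\le(T-s)^{1-\eta}$ for $r\ge s$) together with Doob's maximal inequality; for the $\int (T-s)^\rho(Y^n_s)^2 ds$ integral one exchanges Fubini with the conditional expectation in $G_s$; and the $Y^n_s f^0_s$ term is treated by Young-splitting $Y^n_s f^0_s = (Y^n_s(T-s)^p)\cdot((T-s)^{-p}f^0_s)$ and using the same a priori bound for the first factor.

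The main technical obstacle is the bookkeeping of the $\ell/2$-th moment when $\ell\in(1,2)$: in that range $x\mapsto x^{\ell/2}$ is concave, so the naive H\"older step to pull $\E[\cdot]^{\ell/2}$ inside a time integral goes the wrong way. One must instead exploit the conditional-expectation structure of $G_s$ (Fubini on $\E[\int_0^T(T-r)^{\eta-1}H_r^\ell dr]$ \emph{before} extracting the $2/\ell$-power) and use the Doob inequality tuned to the exponent $\ell/2$ applied to the positive supermartingale $G_s$. The precise value of $\rho$ in the statement is exactly what is needed for the resulting time-weight to coincide with the integrable exponent $-1+\eta$ of \eqref{eq:alpha_gamma_2}.
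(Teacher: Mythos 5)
Your overall skeleton is a legitimate reorganization of the argument (It\^o at power $2$ with weight $\rho$, then an $\ell/2$-th moment; the paper instead applies It\^o to $(T-t)^{\delta}|Y^n_t|^{\ell}$ with $\delta=\ell\rho/2$, takes expectations at exponent $1$, and only afterwards interpolates back to the weight $(T-s)^{\rho}$ via a H\"older--Young step). The exponent bookkeeping $\rho-2p=2(\eta-1)/\ell$ is correct and is indeed the reason \eqref{eq:alpha_gamma_2} enters. However, there is one genuine gap, precisely at the step you flag as the ``main technical obstacle''. After BDG and Young you must bound
$\E\bigl[\bigl(\sup_{s}(T-s)^{\rho}(Y^n_s)^2\bigr)^{\ell/2}\bigr]=\E\bigl[\sup_{s}(T-s)^{\delta}(Y^n_s)^{\ell}\bigr]$.
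Feeding in the a priori estimate \eqref{eq:a_priori_estimate} and using $(T-s)^{\eta-1}G_s\le N_s:=\E\bigl[\int_0^T(T-r)^{\eta-1}H_r^{\ell}\,dr\,\big|\,\tri_s\bigr]$ reduces this to $\E[\sup_s N_s]$, where $N$ is a closed positive martingale whose terminal value is, under \eqref{eq:alpha_gamma_2}, only in $L^1(\Omega)$. Doob's maximal inequality is unavailable at exponent $1$: $\E[\sup_s\E[\Xi|\tri_s]]$ can be infinite for $\Xi\in L^1\setminus L\log L$. The positive-supermartingale maximal inequality at exponent $\ell/2<1$ that you invoke gives $\E[(\sup_s N_s)^{\ell/2}]\le C(\E N_0)^{\ell/2}$, but the power $\ell/2$ has already been spent converting $G_s^{2/\ell}$ into $G_s$, so what is actually required is the exponent-$1$ maximal function; the exponents do not match. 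The same problem recurs in your treatment of $\E\bigl[\bigl(\int_0^T(T-s)^{\rho}(Y^n_s)^2ds\bigr)^{\ell/2}\bigr]$, where the $L^{2/\ell}$--$L^1$ interpolation again produces a $\sup_s N_s$ factor.

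The paper circumvents this by never appealing to the a priori estimate for the supremum: working at the power $\ell$ matched to the available integrability, it first obtains $\E\int_0^T(T-s)^{\delta}|Y^n_s|^{\ell-2}\bigl(|Z^n_s|^2+\|U^n_s\|^2_{\bL^2_\lambda}\bigr)\ind_{Y^n_s\neq0}\,ds\le C$ (the drift terms $\int(T-s)^{\delta-1}|Y^n_s|^{\ell}ds$ \emph{are} handled by the a priori estimate, but at exponent $1$ where Fubini suffices), and then controls $\E\bigl[\sup_t(T-t)^{\delta}|Y^n_t|^{\ell}\bigr]$ from the BSDE itself via BDG, the martingale brackets being dominated by $\sup_t\bigl[(T-t)^{\delta}|Y^n_t|^{\ell}\bigr]$ times the already-estimated weighted integrals, so that a Young absorption closes the loop. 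Finally, for $1<\ell<2$ the H\"older--Young interpolation with $\zeta=\sup_t(T-t)^{\delta/\ell}Y^n_t$ converts the $|Y^n|^{\ell-2}$-weighted bound into the stated $(T-s)^{\rho}$-weighted one. To repair your proof you should replace the Doob step by this BDG-from-the-equation argument; as written, the claimed bound on the supremum does not follow from \eqref{eq:alpha_gamma_2}.
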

The proof of this proposition is postponed to the next section. In the sequel we will need this sharper estimate on $Z$ and $U$ but with the technical condition 
\begin{equation}\label{eq:cond_rho} \tag{A9}
\rho =\frac{2}{q} + 2\left( 1 -\frac{1}{\ell} \right) + \frac{2\eta}{\ell} < 1
\end{equation}
This condition $\rho < 1$ is a balance between the non linearity $q$ and the singularity of the generator $f$. 
\begin{rmrk}[on Condition \eqref{eq:cond_rho}]
$\ $

\begin{enumerate}
\item If $f^0$ and $(1/a)^{1/q}$ are in $L^\ell ((0,T)\times \Omega)$, then \eqref{eq:alpha_gamma_2} holds for any $0 < \eta< 1$. Then $\rho<1$ for $\ell < 2$ and $q > \frac{2\ell}{2-\ell}.$ 
\item In particular if the generator is $f(y) = - y|y|^q$ (Example \ref{ex:toy_ex}), then $\rho < 1$ if $q > 2$, which was supposed in \cite{popi:06}.
\item In Example \ref{ex:power_sing}, the constant $\rho$ satisfies:
$$2 \max \left(\frac{(1+\varsigma)}{q}, -(1-\varpi) \right)+ 2\left( 1 -\frac{1}{\ell} \right)  < \rho.$$
The constant $\ell > 1$ can be chosen close to 1. Thus $\rho <1$ if 
$$2 \max \left( \frac{(1+\varsigma)}{q} , -(1-\varpi) \right) < 1.$$
Hence $\varpi < 3/2$ and $q > 2(1+\varsigma)$. In other words $f^0$ cannot be too singular at time $T$. Moreover the less degenerate is the process $a_t$, in other words the smaller is $\varsigma$, the smaller can be the non linearity coefficient $q$.
\end{enumerate}
\end{rmrk}

Now we work in the half-Markovian setting and we define the function $\Phi$ on $\R^d$ with values in $\R_+ \cup \{+\infty\}$ and with
$$\mathcal{S}=\{ x\in \mathbb{R}^d \quad s.t.\quad \Phi(x)=\infty\}$$
the set of singularity points for the terminal condition induced by $\Phi$. This set $\mathcal{S}$ is supposed to be closed. We also denoted by $\bord$ the boundary of $\mathcal{S}$. 

Our terminal condition $\xi$ satisfies \textbf{Conditions (C)} if 
\begin{equation}\label{eq:markov_setting} \tag{C1}
\xi=\Phi(X_T).
\end{equation} 
and if for all closed set $\mathcal{K} \subset \R^{d} \setminus \cS$ 
\begin{equation}  \label{eq:hyp_g}\tag{C2}
\Phi(X_{T}) \mathbf{1}_{\mathcal{K}}(X_{T}) \in \ L^{1} \left( \Omega, \tri_{T}, \Prb  \right).
\end{equation}
The process $X$ is the solution of a SDE with jumps: 
\begin{equation} \label{eq:SDE}
X_t=X_0+\int_0^t b(s,X_s)ds +\int_0^t \sigma(s,X_s)dW_s+\int_0^t\int_E h(s,X_{s_-},e)\tilde{\mu}(de,ds).
\end{equation}
The coefficients $b:\Omega \times [0,T] \times \R^d\to \R^d$, $\sigma:\Omega \times [0,T] \times \R^d\to \R^{d\times d}$ and $h:\Omega \times [0,T] \times \R^d\times E \to \R^d$ satisfy  \textbf{Assumptions (D)}:
\begin{enumerate}
\item $b$, $\sigma$ and $h$ are jointly continuous w.r.t. $(t,x)$ and Lipschitz continuous w.r.t. $x$ uniformly in $t$, $e$ or $\omega$, i.e. there exists a constant $K_{b,\sigma}$ or $K_h$ such that for any $(\omega,t,e) \in \Omega \times [0,T] \times E$, for any $x$ and $y$ in $\R^d$: a.s.
\begin{equation}\label{eq:lipsch_cond} \tag{D1}
|b(t,x)-b(t,y)| + |\sigma(t,x)-\sigma(t,y)|  \leq K_{b,\sigma} |x-y|
\end{equation}
and
\begin{equation}\label{eq:lipsch_cond_jump} \tag{D2}
|h(t,x,e)-h(t,y,e)|\leq K_h |x-y|(1\wedge |e|).
\end{equation}
\item $b$ and $\sigma$ growth at most linearly:
\begin{equation}\label{eq:growth_cond} \tag{D3}
|b(t,x)| + |\sigma(t,x)|  \leq C_{b,\sigma}(1+ |x|).
\end{equation}
\item $h$ is bounded w.r.t. $t$ and $x$ and there exists a constant $C_h$ such that a.s.
\begin{equation}\label{eq:growthcond_jumps} \tag{D4}
|h(t,x,e)| \leq C_h (1\wedge |e|).
\end{equation}
\end{enumerate}
Under Assumptions \textbf{(D)}, the forward SDE \eqref{eq:SDE} has a unique strong solution $X$ (see \cite{okse:sule:07} or \cite{prot:04}). To lighten the notation, the dimensions of $X$ and of the Brownian motion are the same. But this condition does not matter and we can also work with different dimensions.

In order to prove that $\displaystyle \liminf_{t\to T} Y_t = \xi$, we proceed as in \cite{popi:06}. But there is an extra term due to the covariance between the jumps of the SDE \eqref{eq:SDE} and the jumps of the BSDE \eqref{eq:BSDE_jumps}. To control this additional part, we make a link between the singularity set $\cS$ and the jumps of the forward process $X$. More precisely we assume 

\vspace{0.5cm}
\noindent \textbf{Conditions (E)}.
\begin{enumerate}
\item[(E1).] The boundary $\bord$ is compact and of class $C^2$. 
\item[(E2).] For any $x\in\mathcal{S}$, any $s \in [0,T]$ and $\lambda$-a.s. 
$$x+\beta(s,x,e)\in\mathcal{S}.$$
Furthermore there exists a constant $\nu > 0$ such that if $x\in\bord$, then for any $s \in [0,T]$, $d(x+\beta(s,x,e),\bord) \geq \nu$, $\lambda$-a.s. 
\end{enumerate}
These assumptions mean in particular that if $X_{s^-} \in \mathcal{S}$, then $X_s \in \mathcal{S}$ a.s. Moreover if $X_{s^-}$ belongs to the boundary of $\mathcal{S}$, and if there is a jump at time $s$, then $X_s$ is in the interior of $\mathcal{S}$. Let us now state our first main result. 
\begin{thrm} \label{thm:equality}
Under Conditions {\rm \textbf{(A*)-(C)-(D)-(E)}}, with \eqref{eq:f0_int} and \eqref{eq:cond_rho}, the minimal supersolution $Y$ satisfies a.s. 
$$\liminf_{t\to T} Y_t = \xi.$$
\end{thrm}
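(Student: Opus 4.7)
The plan is to establish the reverse inequality $\limsup_{t \to T} Y_t \le \xi$, since Theorem~\ref{thm:exists_super_sol} and the filtration hypothesis standing throughout Section~\ref{sect:continuity_Y} already provide $\liminf_{t \to T} Y_t \ge \xi$ a.s. The inequality is automatic on $\{\xi = +\infty\} = \{X_T \in \cS\}$. Because $\cS$ is closed, a countable exhaustion of $\R^d \setminus \cS$ by compacts reduces the problem to showing, for each compact $K \subset \R^d \setminus \cS$, that $\limsup_{t \to T} Y_t \le \Phi(X_T)$ a.s.\ on $\{X_T \in K\}$.

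Next I would choose an open neighbourhood $V$ of $K$ whose closure is disjoint from $\cS$, and construct a bounded test function $\varphi \in C^2(\R^d;\R_+)$ coinciding with $\Phi$ on $V$ and extended to the constant $\Phi|_{\bord}$ on $\{x \in \cS : d(x,\bord) \ge \nu/2\}$, a smooth extension being possible thanks to the $C^2$-regularity of $\bord$ assumed in (E1). Following the route of \cite{popi:06}, I localize via the stopping times $\tau_t^K := \inf\{s \ge t :\ X_s \notin V\} \wedge T$, which satisfy $\tau_t^K \nearrow T$ a.s.\ on $\{X_T \in K\}$ by continuity from the right of $X$ at $T$.

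The core step applies It\^o's formula to $\varphi(X_s)$ between $t$ and $\tau_t^K \wedge (T-\eps)$, and subtracts the BSDE dynamics~\eqref{eq:bsde_dynamic} for $Y$ on the same interval. Letting $\eps \to 0$ and taking $\E^{\tri_t}$ kills the $W$- and $\tmu$-martingale integrals as well as the orthogonal martingale $M$, which carries no jump at $T$ under the filtration hypothesis. What remains is an identity of the form
\begin{equation*}
\E^{\tri_t}\!\bigl[Y_{\tau_t^K} - \varphi(X_{\tau_t^K})\bigr] = \E^{\tri_t}\!\!\int_t^{\tau_t^K}\!\!\bigl[(\ope\varphi)(X_s) - f(s,Y_s,Z_s,U_s) + \Upsilon_s\bigr]\, ds,
\end{equation*}
where $\ope\varphi$ contains the local and the non-local part $\cI\varphi$, and $\Upsilon_s := \int_E U_s(e)\,[\varphi(X_{s^-}+h(s,X_{s^-},e)) - \varphi(X_{s^-})]\,\la(de)$ is a covariation term coming from joint jumps of $\mu$ in $X$ and in the BSDE. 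Assumption~\eqref{eq:f_upper_bound} makes $-f$ bounded from above up to the finite contribution of $f^0$, which is integrable in time by~\eqref{eq:f0_int}, while the a priori bound~\eqref{eq:a_priori_estimate} controls $Y$ and Proposition~\ref{prop:sharp_estim_Z_U} combined with $\rho<1$ makes $\E \int_0^T (|Z_s|^2 + \|U_s\|_{\bL^2_\la}^2)^{\ell/2}(T-s)^{\rho\ell/2} ds$ finite near $T$ with an integrable time weight.

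The main obstacle is the pair $(\cI\varphi, \Upsilon)$: when $X_{s^-}$ is close to $\bord$, a jump of $X$ could in principle land just inside $\cS$ where any reasonable extension of $\Phi$ would blow up. Hypothesis~(E2) is precisely what rules this out, by forcing $X_{s^-}+h(s,X_{s^-},e)\in \cS$ at distance at least $\nu$ from $\bord$ whenever $X_{s^-}\in \bord$, so that on this set $\varphi$ is constant and the integrand of $\cI\varphi$ as well as $\Upsilon_s$ are controlled pointwise by $C(1\wedge|e|)$ with $C$ deterministic; invariance of $\cS$ ensures $X$ does not re-exit once it has entered, so the jumps occurring in $V$ satisfy the usual Lipschitz bound \eqref{eq:lipsch_cond_jump}. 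Cauchy--Schwarz then dominates $\E^{\tri_t}\int_t^{\tau_t^K} |\Upsilon_s|\, ds$ by $\|\vartheta\|_{\bL^2_\la}$ times the $L^\ell$-norm of $U$ weighted by $(T-s)^{\rho/2}$, which vanishes as $t \to T$ by Proposition~\ref{prop:sharp_estim_Z_U}. Sending $t \to T$ along $\tau_t^K$ yields $\limsup_{t\to T} Y_t \le \varphi(X_T)=\Phi(X_T)=\xi$ a.s.\ on $\{X_T\in K\}$, and combined with the initial lower bound this proves the theorem.
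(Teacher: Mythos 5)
Your proposal correctly identifies the supporting ingredients (the lower bound from the filtration hypothesis, the role of \eqref{eq:cond_rho} and Proposition \ref{prop:sharp_estim_Z_U} for $Z$ and $U$, the role of \eqref{eq:f0_int}, and the fact that {\bf (E2)} is what tames the non-local term near $\bord$), but the core mechanism has two genuine gaps. First, you propose to build a bounded $C^2$ function $\varphi$ \emph{coinciding with $\Phi$} on a neighbourhood $V$ of a compact $K\subset\R^d\setminus\cS$. Nothing in Conditions {\bf (C)} permits this: \eqref{eq:hyp_g} only gives integrability of $\Phi(X_T)\ind_{\mathcal{K}}(X_T)$, and $\Phi$ is not assumed continuous, locally bounded, or smooth off $\cS$, so no such extension exists in general. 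The paper never approximates $\Phi$; it uses a pure cutoff $\phi=\psi^\gamma$ supported in $\cR$, applies It\^o to the \emph{product} $Y^n\phi(X)$, and only ever touches $\Phi$ through the quantity $\E[\Phi(X_T)\phi(X_T)]$, which is finite by \eqref{eq:hyp_g}. (Incidentally, your covariation term $\Upsilon_s$ can only arise from such a product; it does not appear when you merely subtract the It\^o expansion of $\varphi(X)$ from the BSDE dynamics.)

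Second, your localization by $\tau_t^K=\inf\{s\ge t: X_s\notin V\}\wedge T$ leaves the term $\E^{\tri_t}[Y_{\tau_t^K}]$ uncontrolled: on the event that $X$ exits $V$ at a time $\tau$ close to $T$, the only available bound is the a priori estimate \eqref{eq:a_priori_estimate}, which blows up like $(T-\tau)^{-1/q}$, so the right-hand side of your identity cannot be shown to vanish. The paper's way around this is the self-bounding step: with $\phi=\psi^\gamma$, $\gamma>2(q+1)/q$, the terms $\E\int Y^n|\ope\phi|$ and $\E\int Y^n|\cI(\cdot,\cdot,\phi)|$ are dominated via H\"older by $C\,[\E\int a_s\phi(X_s)(Y^n_s)^{q+1}ds]^{1/(q+1)}$, and since $-f(s,y,0,0)+f^0_s\ge a_s y^{1+q}$ this same quantity appears with the opposite sign in the generator contribution, yielding the uniform bound \eqref{eq:control_cont_main_term}. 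One then passes to the limit in $n$ and in $t$ in the \emph{expectation} identity to get $\E[(\liminf_{t\to T}Y_t)\phi(X_T)]\le\E[\xi\phi(X_T)]$, which combined with the pointwise lower bound gives the a.s.\ equality of the $\liminf$ (note the theorem asserts equality of the $\liminf$, not of the $\limsup$). Both this self-bounding trick and the choice of exponent $\gamma$ are absent from your proposal, and without them the argument does not close.
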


\subsection{An estimate on $Z$ and $U$: proof of Proposition \ref{prop:sharp_estim_Z_U}}

We have shown that the sequences $Z^n$ and $U^n$ converge in a suitable integrability space on $[0,T-\eps]$ for any $\eps > 0$. Here we want to obtain an estimate on the limits $Z$ and $U$ on the whole time interval $[0,T]$. 

In the sequel let us denote by $\Gamma$ the process
\begin{equation*}
\Gamma_t = \frac{K_{\ell,L,\vartheta}^{\ell}}{T-t} \E \left( \ \int_t^{T} \left[ \left(\frac{1}{qa_s}\right)^{1/q} + (T-s)^{1+1/q} f^0_s \right]^{\ell} ds \bigg| \tri_t\right) 
\end{equation*}
thus Estimate \eqref{eq:a_priori_estimate} becomes:
\begin{equation}\label{eq:a_priori_estimate_2} 
0\leq Y_t \leq \frac{1}{(T-t)^{1+1/q-1/\ell}} \Gamma_t^{1/\ell}.
\end{equation}

\begin{lmm}
Under \eqref{eq:alpha_gamma_2}, $\displaystyle \E  \int_0^T (T-s)^{-1+\eta} \Gamma_s ds < +\infty.$
\end{lmm}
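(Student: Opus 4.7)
The plan is to substitute the definition of $\Gamma_s$ into the integral and then apply Fubini (together with the tower property of conditional expectation) to swap the order of integration. Setting $H_r = \left(\frac{1}{qa_r}\right)^{1/q} + (T-r)^{1+1/q} f^0_r$ for brevity, we have
$$\E \int_0^T (T-s)^{-1+\eta} \Gamma_s \, ds = K_{\ell,L,\vartheta}^{\ell} \, \E \int_0^T (T-s)^{-2+\eta} \int_s^T H_r^\ell \, dr \, ds,$$
where we have pulled the conditional expectation out using Fubini and the tower property, since the integrand is non-negative.

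Next I would exchange the order of the $s$ and $r$ integrals. Since the region of integration is $\{0 \leq s \leq r \leq T\}$, this yields
$$\E \int_0^T H_r^\ell \left( \int_0^r (T-s)^{-2+\eta} \, ds \right) dr.$$
The inner integral can be computed explicitly: since $\eta < 1$,
$$\int_0^r (T-s)^{-2+\eta} ds = \frac{(T-r)^{-1+\eta} - T^{-1+\eta}}{1-\eta} \leq \frac{(T-r)^{-1+\eta}}{1-\eta}.$$

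Putting this together gives
$$\E \int_0^T (T-s)^{-1+\eta} \Gamma_s \, ds \leq \frac{K_{\ell,L,\vartheta}^{\ell}}{1-\eta} \, \E \int_0^T (T-r)^{-1+\eta} H_r^\ell \, dr,$$
which is precisely the quantity assumed to be finite in Assumption \eqref{eq:alpha_gamma_2}. There is no real obstacle here: the proof amounts to a Fubini swap and an elementary one-dimensional integral, the only point requiring care being that $-1+\eta < 0$ so the antiderivative has the sign reported above and we discard the non-positive boundary term at $s=0$.
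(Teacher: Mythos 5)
Your proof is correct and follows essentially the same route as the paper: Fubini/tower to reduce to the deterministic double integral, exchange of the $s$ and $r$ integrations, and the explicit evaluation $\int_0^r (T-s)^{-2+\eta}\,ds \leq (T-r)^{-1+\eta}/(1-\eta)$ (valid since $\eta<1$), which brings the expression back to the quantity controlled by \eqref{eq:alpha_gamma_2}. No issues.
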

\begin{proof}
Note that
\begin{eqnarray*}
(T-s)^{-1+\eta}\E( \Gamma_s) & = &  K_{\ell,L,\vartheta}^{\ell} (T-s)^{-2+\eta} \int_s^{T} \E \left[ \left(\frac{1}{qa_u}\right)^{1/q} + (T-u)^{1+1/q} f^0_u \right]^{\ell} du \\
& = &  K_{\ell,L,\vartheta}^{\ell} (T-s)^{-2+\eta} \int_0^T \theta_u \ind_{u\geq s} du
\end{eqnarray*}
with 
$$\theta_u =  \E \left[ \left(\frac{1}{qa_u}\right)^{1/q} + (T-u)^{1+1/q} f^0_u \right]^{\ell}.$$
Hence by Fubini's theorem
\begin{eqnarray*}
\E  \int_0^T (T-s)^{-1+\eta} \Gamma_s ds &= & K_{\ell,L,\vartheta}^{\ell}  \int_0^T (T-s)^{-2+\eta} \left(  \int_0^T   \theta_u \ind_{u\geq s} du \right) ds \\
& = & K_{\ell,L,\vartheta}^{\ell}   \int_0^T \theta_u \left(   \int_0^u (T-s)^{-2+\eta} ds \right) du \\
& = &   \frac{K_{\ell,L,\vartheta}^{\ell} }{1-\eta}  \int_0^T(T-u)^{-1+\eta} \theta_u \left( 1  - (1-u/T)^{1-\eta} \right) du \\
& \leq & \frac{K_{\ell,L,\vartheta}^{\ell} }{1-\eta}  \int_0^T(T-u)^{-1+\eta} \theta_u du < +\infty.
\end{eqnarray*}
This achieves the proof of the lemma.
\end{proof} 

Now let us prove the sharper estimates on $Z$ and $U$ given by Proposition \ref{prop:sharp_estim_Z_U}: there exists a constant $C$ independent of $n$ such that the process $(Z^n,U^n)$ satisfies:
$$\E\left[ \int_0^T (T-s)^{\rho} \left( |Z^n_s|^2 + \|U^n_s\|^2_{\bL^2_\lambda} \right) ds  \right]^{\ell/2} \leq C.$$
where the constant $\rho$ is given by \eqref{eq:def_rho}.

\begin{proof}
For the constant $\eta > 0$ of \eqref{eq:alpha_gamma_2}, let us define
$$\delta = \ell-1  +\frac{\ell}{q} + \eta =\frac{\ell}{2} \rho > 0.$$
We define $c(\ell) = \frac{\ell((\ell-1)\wedge 1)}{2}$, $\check{x} = |x|^{-1} x \ind_{x\neq 0}$ and we want to apply It\^o's formula to $(T-t)^\delta |Y^n_t|^\ell$ (see \cite{krus:popi:14}, Corollary 1 and Remark 1). We fix $\eps > 0$ and $\tau = T-\eps$ in the sequel. Hence we have for $0 \leq t \leq \tau$:
\begin{eqnarray} \label{eq:gene_Ito_1}
&&(T-t)^\delta |Y^n_t|^\ell \leq \eps^\delta |Y^n_{T-\eps}|^\ell + \int_t^\tau \delta (T-s)^{\delta-1} |Y^n_s|^\ell ds \\ \nonumber
&&\quad + \ell \int_t^\tau (T-s)^{\delta}|Y^n_s|^{\ell-1} \check{Y}^n_s f(s,Y^n_s,Z^n_s,U^n_s) ds \\ \nonumber
&& \quad  -  \ell \int_t^\tau (T-s)^{\delta}|Y^n_s|^{\ell-1} \check{Y}^n_s Z^n_s dW_s  -  \ell  \int_t^\tau (T-s)^{\delta}|Y^n_{s^-}|^{\ell-1} \check{Y}^n_{s^-} dM^n_s \\ \nonumber
& &\quad  -  \ell \int_t^\tau (T-s)^{\delta} |Y^n_{s^-}|^{\ell-1} \check{Y}^n_{s^-}  \int_E U^n_s(e) \tmu(de,ds)   \\ \nonumber
& &\quad -  \int_{t}^{\tau} (T-s)^{\delta} \int_E \left[ |Y^n_{s^-}+U^n_s(e)|^\ell -|Y^n_{s^-}|^\ell - \ell |Y^n_{s^-}|^{\ell-1} \check{Y}^n_{s^-}  U^n_s(e) \right] \mu(de,ds) \\  \nonumber
&& \quad - \sum_{ t < s \leq \tau} (T-s)^{\delta} \left[ |Y^n_{s^-}+\Delta M^n_s|^\ell - |Y^n_{s^-}|^\ell - \ell |Y^n_{s^-}|^{\ell-1} \check{Y}^n_{s^-}  \Delta M^n_s \right] \\ \nonumber
&&\quad - c(\ell) \int_t^\tau  (T-s)^{\delta} |Y^n_{s}|^{\ell-2} |Z^n_s|^2 \ind_{Y^n_s\neq 0} ds -c(\ell) \int_t^\tau (T-s)^{\delta} |Y^n_{s}|^{\ell-2}  \ind_{Y^n_s\neq 0} d[ M^n ]^c_s .
\end{eqnarray}
The monotonicity Condition \eqref{eq:f_mono} implies that 
\begin{eqnarray*}
&&\int_t^\tau (T-s)^{\delta}|Y^n_s|^{\ell-1} \check{Y}^n_s f(s,Y^n_s,Z^n_s,U^n_s) ds  \leq \int_t^\tau (T-s)^{\delta}|Y^n_s|^{\ell-1} \check{Y}^n_s f(s,0,Z^n_s,U^n_s) ds  
\end{eqnarray*}
and we use the regularity Conditions \eqref{eq:f_lip_z} and \eqref{eq:f_jump_comp} to obtain:
\begin{eqnarray*}
&& \int_t^\tau (T-s)^{\delta}|Y^n_s|^{\ell-1} \check{Y}^n_s f(s,0,Z^n_s,U^n_s) ds   \leq  L \int_t^\tau (T-s)^{\delta}|Y^n_s|^{\ell-1} |Z^n_s| ds  \\
&& \qquad + L \int_t^\tau (T-s)^{\delta}|Y^n_s|^{\ell-1} \|U^n_s\|_{L^2_\lambda}ds + \int_t^\tau (T-s)^{\delta}|Y^n_s|^{\ell-1} f^0_s ds .
\end{eqnarray*}
Young's inequality leads to:
\begin{eqnarray*}
L \int_t^\tau (T-s)^{\delta}|Y^n_s|^{\ell-1} |Z^n_s| ds & \leq &  \frac{L^2\ell^2}{2c(\ell)}\int_t^\tau (T-s)^{\delta}|Y^n_s|^{\ell} ds \\
&+ & \frac{c(\ell)}{2} \int_t^\tau  (T-s)^{\delta} |Y^n_{s}|^{\ell-2} |Z^n_s|^2 \ind_{Y^n_s\neq 0} ds,
\end{eqnarray*}
\begin{eqnarray*}
L \int_t^\tau (T-s)^{\delta}|Y^n_s|^{\ell-1}  \|U^n_s\|_{L^2_\lambda}ds & \leq &  \frac{L^2\ell^2}{2c(\ell)}\int_t^\tau (T-s)^{\delta}|Y^n_s|^{\ell} ds \\
&+ & \frac{c(\ell)}{2} \int_t^\tau  (T-s)^{\delta} |Y^n_{s}|^{\ell-2}  \|U^n_s\|^2_{\bL^2_\lambda} \ind_{Y^n_s\neq 0} ds
\end{eqnarray*}
and
\begin{eqnarray*}
\int_t^\tau (T-s)^{\delta}|Y^n_s|^{\ell-1} f^0_s ds & \leq & (\ell-1) \int_t^\tau (T-s)^{\delta}|Y^n_s|^{\ell} ds +  \int_t^\tau(T-s)^{\ell (1+1/q)}|f^0_s|^{\ell}ds .
\end{eqnarray*}
Finally all local martingales involved above in \eqref{eq:gene_Ito_1} are true martingales. Hence taking the expectation and using the convexity of $x\mapsto |x|^\ell$ we have:
\begin{eqnarray} \label{eq:gene_Ito_2}
&& \sup_{t\in [0,\tau]}\E \left[ (T-t)^{\delta} |Y^n_{t}|^{\ell} \right] + \frac{c(\ell)}{2} \E \int_t^\tau  (T-s)^{\delta} |Y^n_{s}|^{\ell-2} |Z^n_s|^2 \ind_{Y^n_s\neq 0} ds \\ \nonumber
&&\quad \leq \eps^\delta |Y^n_{T-\eps}|^\ell + \E \int_t^\tau \delta (T-s)^{\delta-1} |Y^n_s|^\ell ds \\ \nonumber
&& \qquad + \ell \left(2 \frac{L^2\ell^2}{2c(\ell)} + (\ell-1) \right) \E \int_t^\tau (T-s)^{\delta}|Y^n_s|^{\ell} ds \\ \nonumber
&& \qquad +\ell \E \int_t^\tau(T-s)^{\ell (1+1/q)}|f^0_s|^{\ell}ds \\ \nonumber
& &\qquad - \E \int_{t}^{\tau} (T-s)^{\delta} \int_E \left[ |Y^n_{s^-}+U^n_s(e)|^\ell -|Y^n_{s^-}|^\ell - \ell |Y^n_{s^-}|^{\ell-1} \check{Y}^n_{s^-}  U^n_s(e) \right] \mu(de,ds) \\  \nonumber
&& \qquad + \frac{c(\ell)}{2} \E \int_t^\tau  (T-s)^{\delta} |Y^n_{s}|^{\ell-2}  \|U^n_s\|^2_{\bL^2_\lambda} \ind_{Y^n_s\neq 0} ds.
\end{eqnarray}
From Lemma 9 in \cite{krus:popi:14}, 
 \begin{eqnarray*}
&& \int_{t}^{\tau} (T-s)^{\delta} \int_E \left[ |Y^n_{s^-}+U^n_s(e)|^\ell -|Y^n_{s^-}|^\ell - \ell |Y^n_{s^-}|^{\ell-1} \check{Y}^n_{s^-}  U^n_s(e) \right] \mu(de,ds) \\
&&  \geq c(\ell) \int_{t}^{\tau} (T-s)^{\delta} \int_E |U^n_s(e)|^2  \left( |Y^n_{s^-}|^2 \vee  |Y^n_{s^-} +U^n_s(e)|^2 \right)^{\ell/2-1} \ind_{|Y^n_{s^-}|\vee |Y^n_{s^-}+ U^n_s(e)| \neq 0} \mu(de,ds).
\end{eqnarray*}
By a localization argument the two following exceptations are the same (see proof of Proposition 3 in \cite{krus:popi:14}):
$$\E\int_{t}^{\tau} (T-s)^{\delta} \int_E |U^n_s(e)|^2  \left( |Y^n_{s^-}|^2 \vee  |Y^n_{s^-} +U^n_s(e)|^2 \right)^{\ell/2-1} \ind_{|Y^n_{s^-}|\vee |Y^n_{s^-}+ U^n_s(e)| \neq 0} \mu(de,ds)$$
$$\E \int_t^\tau  (T-s)^{\delta} |Y^n_{s}|^{\ell-2}  \|U^n_s\|^2_{\bL^2_\lambda} \ind_{Y^n_s\neq 0} ds.$$
Finally we have:
\begin{eqnarray} \label{eq:gene_Ito_3}
&&  \sup_{t\in [0,\tau]}\E \left[ (T-t)^{\delta} |Y^n_{t}|^{\ell} \right] +\frac{c(\ell)}{2} \E \int_0^\tau  (T-s)^{\delta} |Y^n_{s}|^{\ell-2} |Z^n_s|^2 \ind_{Y^n_s\neq 0} ds  \\ \nonumber
&&\quad + \frac{c(\ell)}{2} \E \int_0^\tau  (T-s)^{\delta} |Y^n_{s}|^{\ell-2}  \|U^n_s\|^2_{\bL^2_\lambda} \ind_{Y^n_s\neq 0} ds\\ \nonumber
&&\quad \leq\eps^\delta |Y^n_{T-\eps}|^\ell+ \E \int_0^\tau \delta (T-s)^{\delta-1} |Y^n_s|^\ell ds \\ \nonumber 
&& \qquad + \ell \left(2 \frac{L^2\ell^2}{2c(\ell)} + (\ell-1) \right) \E \int_0^\tau (T-s)^{\delta}|Y^n_s|^{\ell} ds  +\ell \E \int_0^\tau(T-s)^{\ell (1+1/q)}|f^0_s|^{\ell}ds .
\end{eqnarray}
Using \eqref{eq:a_priori_estimate_2}, the first term on the right-hand side can be controlled as follows:
\begin{eqnarray*}
\E \int_0^\tau (T-s)^{\delta-1} |Y^n_s|^\ell ds & \leq &\E \int_0^T (T-s)^{\delta-1} \frac{1}{(T-s)^{\ell+\ell/q-1}} \Gamma_s ds \\
& = &\E \int_0^T (T-s)^{-1+\eta}  \Gamma_s ds < +\infty.
\end{eqnarray*}
The second one satisfies the same estimate:
$$ \E \int_0^\tau (T-s)^{\delta}|Y^n_s|^{\ell} ds  \leq \E \int_0^T (T-s)^{\eta}  \Gamma_s ds < +\infty.$$
And the last term is bounded by Condition \eqref{eq:alpha_gamma}. Therefore we can let $\eps$ go to zero in \eqref{eq:gene_Ito_3} and we can replace every $\tau$ by $T$. To finish the proof, we use the same tricks as in the proof of Proposition 3 in \cite{krus:popi:14}. First we can control the quantity 
$$  \E \left[ \sup_{t\in [0,T]}(T-t)^{\delta} |Y^n_{t}|^{\ell} \right] $$
by the same right-hand side (up to some multiplicative constant). Then if $\ell \geq 2$, we use \eqref{eq:gene_Ito_1} with $\ell=2$ and the result follows immediately. If $1 < \ell < 2$, the conclusion is more tricky. Let us define
$\displaystyle \zeta = \sup_{t\in[0,T]} (T-t)^{\delta/\ell}Y^n_{t}$ and:
\begin{eqnarray} \nonumber
&& \E \left( \int_0^T (T-s)^{2\delta /\ell}|Z^n_s|^2 ds \right)^{\ell/2}  =  \E \left( \int_0^T(T-s)^{2\delta /\ell}\ind_{Y^n_s \neq 0} |Z^n_s|^2 ds\right)^{\ell/2} \\ \nonumber
&&\quad = \E \left( \int_0^T (T-s)^{2\delta /\ell}\left( Y^n_{s} \right)^{2-\ell}\left( Y^n_{s} \right)^{\ell-2}\ind_{Y^n_s \neq 0}  |Z^n_s|^2ds \right)^{\ell/2} \\ \nonumber
&& \quad \leq \E \left[ \zeta^{(2-\ell)\ell/2} \left(\int_0^T(T-s)^{\delta} \left( Y^n_{s} \right)^{\ell-2}\ind_{Y^n_s \neq 0}  |Z^n_s|^2 ds \right)^{\ell/2}\right]\\ \nonumber
&& \quad \leq \left\{\E \left[\zeta^{\ell}\right]\right\}^{(2-\ell)/2}  \left\{\E \int_0^T (T-s)^{\delta} \left( Y^n_{s} \right)^{\ell-2}\ind_{Y^n_s \neq 0}  |Z^n_s|^2 ds \right\}^{\ell/2} \\ \nonumber
&& \quad  \leq \frac{2-\ell}{2} \E \left[\zeta^{\ell}\right] + \frac{\ell}{2} \E \int_0^T(T-s)^{\delta} \left( Y^n_{s} \right)^{\ell-2}\ind_{Y^n_s \neq 0}  |Z^n_s|^2 ds < +\infty.
\end{eqnarray}
where we have used H\"older's and Young's inequality with $ \frac{2-\ell}{2} + \frac{\ell}{2}=1$. The same holds for $U^n$. Therefore since
$$2\delta /\ell =  2 \left(1 -\frac{1}{\ell} \right) +\frac{2}{q} + \frac{2\eta}{\ell},$$
we obtain the same desired result.
\end{proof}

Note that from the proof we also could derive an estimate on $M$. But we will not need it in the rest of the paper. 
\begin{rmrk}
If $f(y)=-y|y|^q$, we can take $\ell=1$ and $\eta=0$, in other words $\alpha =2/q$. The constant $C$ is explicitely given by: $C = 16\left(\frac{1}{q}\right)^{2/q}$. The proof is a direct modification of the proof of Proposition 10 in \cite{popi:06}. 
\end{rmrk}

\subsection{Proof of Theorem \ref{thm:equality}} \label{sect:equality}

In order to prove this theorem we follow the same procedure as in \cite{popi:06}. We consider $(Y^n,Z^n,U^n,M^n)$ the solution of the BSDE \eqref{eq:trunc_BSDE} with terminal condition $\xi \wedge n$ and generator $f_n$. Let $\phi$ be a non negative function in $C_b^2(\R)$, the set of bounded smooth functions of class $C^2$, with bounded derivatives. We compute It\^o's formula to the process $Y^n\phi(X)$ between $0$ and $t$. 
\begin{eqnarray*}
Y^n_t\phi(X_t) & =& Y^n_0 \phi(X_0)+\int_0^t Y^n_{s_-} d\phi(X_s) +\int_0^t
\phi(X_{s_-}) dY^n_s+\langle Y^n,\phi(X)\rangle_t \\
& =& Y^n_0\phi(X_0)- \int_0^t \phi(X_{s_-}) f_n(s,Y^n_s,Z^n_s,U^n_s) ds +\int_0^t Y^n_{s_-} \ope\phi(s,X_s) ds \\
&& +\int_0^t\int_E Y^n_{s_-} \left(\phi(X_s)-\phi(X_{s_-})-\nabla\phi(X_{s_-})\beta(s,X_{s_-},e)\right)\mu(ds,de)\\
&& + \int_0^t Y^n_{s_-}\nabla\phi(X_s)\sigma(s,X_s)dW_s+\int_0^t \phi(X_{s_-}) Z^n_s
dW_s + \int_0^t \phi(X_{s_-}) dM^n_s\\
&& + \int_0^t\int_E \phi(X_{s_-})U^n_s(e)\tilde{\mu}(de,ds)  + \int_0^t\int_E Y^n_{s_-}\nabla\phi(X_s)\beta(s,X_{s_-},e)\tilde{\mu}(de,ds)\\
&& +\int_0^t\nabla\phi(X_s)\sigma(s,X_s)Z^n_s ds +\int_0^t\int_E (\phi(X_s)-\phi(X_{s_-}))U^n_s(e)\mu(ds,de).
\end{eqnarray*}
The operators $\ope$ and $\mathcal{I}$ are defined on $C^2(\R)$ by:
$$\ope \phi (t,x) = \nabla\phi(x)b(t,x)+\frac{1}{2} \tr( D^2\phi(x)(\sigma\sigma^*)(t,x))$$
and
\begin{equation}\label{eq:non_local_gene}
\mathcal{I} (t,x,\phi)= \int_E [\phi(x+\beta(t,x,e))-\phi(x) - (\nabla \phi)(x)\beta(t,x,e)] \lambda(de).
\end{equation}
Since $(Y^n,Z^n,U^n,M^n)$ belongs to $\bS^2(0,T)$, since $X$ is in $\bH^2(0,T)$, and since $\phi$ and the derivatives of $\phi$ are supposed to be bounded, we can take the expectation of these terms:
\begin{eqnarray} \label{eq:cont_test_funct_1}
&& \E[Y^n_t\phi(X_t)] = \E[Y^n_0\phi(X_0)] - \E\left[\int_0^t \phi(X_{s_-}) f_n(s,Y^n_s,Z^n_s,U^n_s) ds\right]\\ \nonumber
&& \qquad +  \E\left[\int_0^t Y^n_{s_-} \ope\phi(s,X_s) ds\right] + \E\left[\int_0^t Y^n_{s_-} \mathcal{I} (s,X_{s^-},\phi)ds\right]\\ \nonumber
&& \qquad + \E\left[\int_0^t\nabla\phi(X_s)\sigma(s,X_s)Z^n_s ds\right] + \E\left[\int_0^t\int_E
(\phi(X_s)-\phi(X_{s_-}))U^n_s(e)\lambda(de)ds\right].
\end{eqnarray}
Recall the main idea of \cite{popi:06}. First we prove that we can pass to the limit on $n$ in \eqref{eq:cont_test_funct_1} and that the limits have suitable integrability conditions on $[0,T] \times \Omega$. Secondly we write \eqref{eq:cont_test_funct_1} between $t$ and $T$ and we pass to the limit when $t$ goes to $T$. 

Now we choose $\phi$ such that the support of $\phi$ is included in $\mathcal{R} = \mathcal{S}^c$. From the Assumptions \textbf{(C1)} and \textbf{(C2)} on $\xi=\Phi(X_T)$, we have for any $n$:
$$\E (Y^n_T \phi(X_T)) \leq \E (\Phi(X_T)\phi(X_T)) < +\infty.$$
Moreover from the a priori estimate \eqref{eq:a_priori_estimate_2}, Assumption \eqref{eq:alpha_gamma} and from the boundedness of $\phi$, for any $t < T$
$$\E (Y^n_t \phi(X_t)) \leq \frac{1}{(T-t)^{1/q+1-1\ell}} \E (\Gamma^\ell_t\phi(X_t)) < +\infty.$$
Now we decompose the quantity with the generator $f_n$ as follows:
\begin{eqnarray} \label{eq:decomp_generator}
&&\E\left[\int_0^t \phi(X_{s_-}) f_n(s,Y^n_s,Z^n_s,U^n_s) ds\right] \\ \nonumber
&& \quad = \E\left[\int_0^t \phi(X_{s_-}) (f(s,Y^n_s,0,0) -f^0_s) ds\right]  +\E\left[\int_0^t \phi(X_{s_-}) (f^0_s\wedge n) ds\right] \\ \nonumber
&&\qquad + \E\left[\int_0^t \phi(X_{s_-}) \left( f(s,Y^n_s,Z^n_s,0) - f(s,Y^n_s,0,0)\right) ds\right] \\ \nonumber
&& \qquad + \E\left[\int_0^t \phi(X_{s_-}) \left( f(s,Y^n_s,Z^n_s,U^n_s) - f(s,Y^n_s,Z^n_s,0)\right) ds\right] \\ \nonumber
&& \quad = \E\left[\int_0^t \phi(X_{s_-}) (f(s,Y^n_s,0,0)-f^0_s) ds\right] +\E\left[\int_0^t \phi(X_{s_-}) (f^0_s\wedge n) ds\right] \\ \nonumber
&&\qquad + \E\left[\int_0^t \phi(X_{s_-}) \zeta^n_s Z^n_s ds\right] + \E\left[\int_0^t \phi(X_{s_-}) \mathcal{U}^n_s ds\right]
\end{eqnarray}
where $\zeta^n_s$ is a $k$-dimensional random vector defined by:
$$\zeta^{i,n}_s = \frac{\left( f(s,Y^n_s,Z^n_s,0) - f(s,Y^n_s,0,0)\right) }{Z^{i,n}_s} \ind_{Z^{i,n}_s\neq 0}$$
and
$$\mathcal{U}^n_s  = f(s,Y^n_s,Z^n_s,U^n_s) - f(s,Y^n_s,Z^n_s,0).$$
From Condition \eqref{eq:f_lip_z}, $|\zeta^n_s|\leq K$. Now we can write \eqref{eq:cont_test_funct_1} as follows:
\begin{eqnarray} \label{eq:cont_test_funct_2}
&& \E[Y^n_t\phi(X_t)] = \E[Y^n_0\phi(X_0)] +\E\left[\int_0^t \phi(X_{s_-}) (f^0_s\wedge n) ds\right]  \\ \nonumber
&& \quad - \E\left[\int_0^t \phi(X_{s_-}) (f(s,Y^n_s,0,0)-f^0_s) ds\right]\\ \nonumber
&& \quad +  \E\left[\int_0^t Y^n_{s_-} \ope\phi(s,X_s) ds\right] + \E\left[\int_0^t Y^n_{s_-} \mathcal{I} (s,X_{s^-},\phi)ds\right]\\ \nonumber
&& \quad + \E\left[\int_0^t \left( \nabla\phi(X_s)\sigma(s,X_s) - \phi(X_{s}) \zeta^n_s\right) Z^n_s ds\right] \\ \nonumber
&& \quad + \E\left[\int_0^t \left[ \int_E \left[(\phi(X_{s})-\phi(X_{s_-}))\right] U^n_s(e)\lambda(de) - \phi(X_{s_-})\mathcal{U}^n_s \right] ds\right].
\end{eqnarray}
Since $\phi$ is bounded and $f^0 \in L^1((0,T)\times \Omega)$ (Condition \eqref{eq:f0_int}):
\begin{equation} \label{eq:control_cont_0}
\E \int_0^T | \phi(X_{s_-})| (f^0_s\wedge n) ds \leq C.
\end{equation}
We use H\"older's and Young's inequalities to obtain:
\begin{eqnarray*}
&& \int_0^T |\left( \nabla\phi(X_s)\sigma(s,X_s) + \phi(X_{s}) \zeta^n_s\right)Z^n_s| ds\\ \nonumber
&& \quad  \leq \left[\int_0^T (T-s)^{\rho} |Z^n_s|^2 ds\right]^{1/2}  \left[ \int_0^T \frac{|\nabla\phi(X_s)\sigma(s,X_s) + \phi(X_{s}) \zeta^n_s|^2}{(T-s)^{\rho}} ds \right]^{1/2}  \\  \nonumber
&&\quad \leq \frac{1}{\ell} \left[\int_0^T (T-s)^{\rho} |Z^n_s|^2 ds\right]^{\frac{\ell}{2}}  + \frac{\ell-1}{\ell} \left[ \int_0^T \frac{|\nabla\phi(X_s)\sigma(s,X_s) + \phi(X_{s}) \zeta^n_s|^2}{(T-s)^{\rho}} ds \right]^{\frac{\ell-1}{2\ell}} .
\end{eqnarray*}
Taking the expectation and thanks to Proposition \ref{prop:sharp_estim_Z_U}, the first term on the right-hand side is bounded. For the second term, by assumption, $\phi$ and $\nabla \phi$ are supposed to be bounded, $\zeta^n_s$ is also bounded, $\sigma$ grows linearly and $X \in \bH^2(0,T)$. Hence if $\rho < 1$ (condition \eqref{eq:cond_rho}), there exists a constant $C$ such that for any $n$
\begin{equation} \label{eq:control_cont_1}
\E \int_0^T |\left( \nabla\phi(X_s)\sigma(s,X_s) + \phi(X_{s}) \zeta^n_s\right)Z^n_s| ds\leq C.
\end{equation}
The same estimate holds for $U^n$. Indeed from \eqref{eq:f_jump_comp}:
\begin{eqnarray*}
&& - \int_E \vartheta(e) |U^n_s(e)| \lambda(de)\leq  \int_E \hat \kappa_s^{n}(e) U^n_s(e) \lambda(de) \\
&& \qquad \leq \mathcal{U}^n_s \leq \int_E \kappa_s^{n}(e) U^n_s(e) \lambda(de) \leq \int_E \vartheta(e) |U^n_s(e)| \lambda(de)
\end{eqnarray*}
if $\kappa^n_s(e)=\kappa_s^{Y^n,Z^n,U^n,0}(e)$ and $\hat \kappa_s^{n}(e) = \kappa_s^{Y^n,Z^n,0,U^n}(e)$. Hence
\begin{eqnarray*}
&&\E \int_0^t \left[ \int_E \left| \left[(\phi(X_{s})-\phi(X_{s_-}))\right] U^n_s(e) \right| \lambda(de) \right] + |\phi(X_{s_-})\mathcal{U}^n_s |ds  \\
&&\quad \leq \E \int_0^T\int_E\left[ |\phi(X_{s})-\phi(X_{s_-}) |+\vartheta(e)| \phi(X_{s_-})| \right]|U^n_s(e)| \lambda(de)ds  \\ \nonumber
&&\quad  \leq  \frac{1}{\ell} \E \left[\int_0^T (T-s)^{\rho} \|U^n_s\|^2_{L^2_\la} ds\right]^{\frac{\ell}{2}}\\ \nonumber
&& \qquad +  \frac{l-1}{l}  \E \left[ \int_0^T \int_E \frac{(|\phi(X_{s^-}+\beta(s,X_{s^-},e))-\phi(X_{s_-})|+\vartheta(e) |\phi(X_{s_-})|)^2}{(T-s)^{\rho}} \lambda(de) ds \right]^{\frac{\ell-1}{2\ell}}, \\ \nonumber
&& \quad \leq C.
\end{eqnarray*}
thus
\begin{equation} \label{eq:control_cont_2}
\E \int_0^T \left( \int_E |\phi(X_{s})-\phi(X_{s_-})| |U^n_s(e)| \lambda(de) +  | \phi(X_{s_-})| |\mathcal{U}^n_s| \right) ds \leq C.
\end{equation}

Now we treat the three terms in \eqref{eq:cont_test_funct_2} containing $Y^n$ and $X$:
\begin{equation*} 
 - \E\left[\int_0^t \phi(X_{s_-}) f(s,Y^n_s,0,0) ds\right]+  \E\left[\int_0^t Y^n_{s_-} \ope\phi(s,X_s) ds\right] + \E\left[\int_0^t Y^n_{s_-} \mathcal{I} (s,X_{s^-},\phi)ds\right].
\end{equation*}
By condition \eqref{eq:f_upper_bound}, the first integral is bounded from below by:
\begin{equation}\label{eq:control_gene_power}
- \E\left[\int_0^t \phi(X_{s_-}) f(s,Y^n_s,0,0) ds\right] \geq \E\left[\int_0^t \phi(X_{s_-}) (a_s) |Y^n_s|^{1+q} ds\right] .
\end{equation}
Now we deal with the terms containing the operators $\ope$ and $\mathcal{I}$. With H\"older's inequality we obtain:
\begin{eqnarray*}
&&  \E \left[ \int_0^T |Y^n_{s_-} \ope (\phi) (s,X_s)| ds \right] \leq  \left[\E  \int_0^T a_s \phi(X_s) (Y^n_{s})^{q+1}  ds \right]^{1/(q+1)}\\
&& \hspace{5cm} \times \left[\E \int_0^T a_s^{-1/q} \phi^{-1/q}(X_s) |\ope (\phi) (s,X_s)|^{(q+1)/q}  ds \right]^{q/(q+1)}.
\end{eqnarray*}
To control the second quantity, we will be more specific about the test-function $\phi$. We will assume that $\phi = \psi^\gamma$ where $\psi$ belongs to $C^\infty_b(\R^d)$ with support in $\cR$ and $\gamma> 2(q+1)/q$. Under this setting, there exists a constant $C$ depending only on $\psi$, $\gamma$, $\sigma$ and $b$ such that 
$$|\ope(\phi)| = |\ope(\psi^\gamma)| \leq C \psi^{\gamma-2}.$$ 
Thus for $\gamma > 2(q+1)/q$
$$\phi^{-1/q}(X_s) |\ope (\phi) (s,X_s)|^{(q+1)/q} \leq C \psi^{-\gamma/q  + (\gamma-2)(q+1)/q}(X_s) = C \psi^{\gamma-2(q+1)/q}(X_s)$$
which is bounded. By condition \eqref{eq:alpha_gamma}, $a^{-1/q}$ is in $L^\ell(\Omega)$. Therefore 
$$\E \int_0^T a_s^{-1/q} \phi^{-1/q}(X_s) |\ope (\phi) (s,X_s)|^{(q+1)/q}  ds \leq C$$
for some constant $C$. Then 
\begin{eqnarray} \label{eq:control_cont_3}
&&  \E \left[ \int_0^T |Y^n_{s_-} \ope (\phi) (s,X_s)| ds \right] \leq C \left[ \E \int_0^T a_s \phi(X_s) (Y^n_{s})^{q+1}  ds \right]^{1/(q+1)}.
\end{eqnarray}

The previous steps were very similar to \cite{popi:06}. Therefore the main difference comes from the term
\begin{equation} \label{eq:jump_term}
\E\left[\int_0^t\int_E Y^n_{s_-} \left( \phi(X_s)-\phi(X_{s_-})-\nabla\phi(X_{s_-})\beta(s,X_{s_-},e)\right)\lambda(de)ds\right].
\end{equation}
In order to control this term, assumptions \textbf{(D)} on the jumps of $X$ and $\mathcal{S}$ will be used. Remember that $\cR = \cS^c$ is open and for any $\eps > 0$ we define
$$\Gamma(\eps):=\{x\in\cR: \ d(x,\bord) \geq \eps \}.$$
$\overline{\Gamma(\eps/2)^c}$ and $\Gamma(\eps)$ are two disjoint closed sets of $\R^d$. By the $C^\infty$ Urysohn lemma, there exists a $C^\infty$ function $\psi$ such that $\psi \in [0,1]$, $\psi\equiv 1$ on $\Gamma(\eps)$ and $\psi\equiv 0$ on $\Gamma(\eps/2)^c$. In particular the support of $\psi$ is included in $\cR$ and since $\bord$ is compact, $\psi$ belongs to $C^\infty_b(\R^d)$. We take $\gamma > 2(q+1)/q$ and we define 
\begin{equation}\label{eq:def_test_func}
\phi = \psi^\gamma.
\end{equation} 
Note that $\phi$ also takes its values in $[0,1]$, $\phi\equiv 1$ on $\Gamma(\eps)$ and $\phi\equiv 0$ on $\Gamma(\eps/2)^c$.

Since $\bord$ is compact and of class $C^1$, then there exists a constant $\eps_0  >0$ such that for every $y\in \cR \cap \Gamma(\eps_0)^c$, there exists a unique $z\in\bord$ such that $d(y,\bord)=\|y-z\|$ (see \cite{gilb:trud:01}, section 14.6). 
\begin{lmm} \label{lem:control_jumps}
Under the above assumptions, let us choose $\eps_1 < \eps_0$ such that $(1+K_\beta)\eps_1 < \nu$ ($K_\beta$ is the Lipschitz constant of $\beta$ w.r.t. $x$, condition \eqref{eq:lipsch_cond_jump}). We have for any $0 < \eps < \eps_1$:
$$\psi(X_{s^-})=0\Rightarrow\psi(X_s)=0.$$
Moreover
$$\frac{\psi(X_s)}{\psi(X_{s^-})} = \psi(X_s)  \ind_{\Gamma(\eps)}(X_{s^-}).$$
\end{lmm}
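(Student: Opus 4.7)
The plan is to establish both assertions by analyzing the position of $X_{s^-}$ relative to the singular set $\cS$, combining Assumption (E2) with the nearest-point regularity afforded by the $C^2$ compactness of $\bord$ and the Lipschitz control on the jump coefficient.

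For the implication $\psi(X_{s^-}) = 0 \Rightarrow \psi(X_s) = 0$, observe that $\{\psi = 0\} = \Gamma(\eps/2)^c = \cS \cup \{x \in \cR : d(x,\bord) < \eps/2\}$. If $X_{s^-} \in \cS$, Condition (E2) directly gives $X_{s^-}+\beta(s,X_{s^-},e)\in\cS$ for every admissible $e$; hence whether or not a jump occurs at time $s$, we have $X_s\in\cS$ and $\psi(X_s)=0$. The delicate case is $X_{s^-}\in\cR$ with $d(X_{s^-},\bord)<\eps/2$. Since $\eps/2<\eps_0$, there is a unique $z\in\bord$ with $\|X_{s^-}-z\|=d(X_{s^-},\bord)$. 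Applying (E2) at $z$ produces $\tilde z:=z+\beta(s,z,e)\in\cS$ with $d(\tilde z,\bord)\geq\nu$, and the Lipschitz estimate \eqref{eq:lipsch_cond_jump} (using $1\wedge|e|\leq 1$) yields
\[
\|X_s-\tilde z\|\leq\|X_{s^-}-z\|+K_\beta\|X_{s^-}-z\|=(1+K_\beta)\|X_{s^-}-z\|<(1+K_\beta)\eps_1<\nu.
\]
Hence $X_s\in B(\tilde z,\nu)\subset B(\tilde z,d(\tilde z,\bord))$, and it remains to argue that this open ball is entirely contained in $\cS$: by construction the ball does not meet $\bord$, so any segment from $\tilde z\in\cS$ to a point of the ball is a connected set avoiding $\bord$; since $\cS$ is closed and $\cR$ is open, such a segment cannot cross from $\cS$ to $\cR$, so every point of the ball lies in $\cS$. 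We conclude $X_s\in\cS$ and $\psi(X_s)=0$.

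For the identity $\psi(X_s)/\psi(X_{s^-})=\psi(X_s)\ind_{\Gamma(\eps)}(X_{s^-})$, which is meant to be interpreted inside integrals against $\mu(de,ds)$ (i.e.\ on jump times, with the convention $0/0=0$), I split on the position of $X_{s^-}$. If $X_{s^-}\in\Gamma(\eps)$, then $\psi(X_{s^-})=1$ and both sides equal $\psi(X_s)$. If $X_{s^-}\notin\Gamma(\eps)$, the right-hand side vanishes and one must show that $\psi(X_s)=0$ at an effective jump time. When $\psi(X_{s^-})=0$ this is exactly Claim 1. When $0<\psi(X_{s^-})<1$, i.e.\ $X_{s^-}\in\Gamma(\eps/2)\setminus\Gamma(\eps)$, one has $d(X_{s^-},\bord)<\eps<\eps_1$, and the nearest-point plus Lipschitz plus ball-in-$\cS$ argument of the previous paragraph applies verbatim, again yielding $X_s\in\cS$ and hence $\psi(X_s)=0$.

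The main obstacle is the geometric step $B(\tilde z,d(\tilde z,\bord))\subset\cS$: this genuinely uses the closedness of $\cS$ together with the $C^2$ regularity of $\bord$ (the latter ensuring the existence and uniqueness of the nearest-point projection whenever the distance to $\bord$ is below $\eps_0$, see \cite{gilb:trud:01}). The calibration $(1+K_\beta)\eps_1<\nu$ is precisely the quantitative ingredient that transfers, via the Lipschitz bound on $\beta$, the universal safety margin $\nu$ provided by (E2) at boundary points $z\in\bord$ to arbitrary points $X_{s^-}$ within distance $\eps_1$ of $\bord$.
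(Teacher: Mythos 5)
Your argument is correct and follows essentially the same route as the paper: the same case split on the position of $X_{s^-}$ (in $\cS$, or in $\cR$ within distance $\eps$ of $\bord$), the same use of the nearest boundary point, the Lipschitz bound on the jump coefficient, and the safety margin $\nu$ from (E2). The only difference is cosmetic, in the final topological step: you show $X_s$ lies in the ball $B(\tilde z,d(\tilde z,\bord))\subset\cS$ via a connectedness argument, whereas the paper reaches a contradiction by locating a crossing of $\bord$ on the segment joining $X_{s^-}+\beta(s,X_{s^-},e)$ to $\tilde z$ — the same argument in different clothing (and your remark that the quotient identity need only hold at jump times, and your noting that the imprecision $\{\psi=0\}\supseteq\Gamma(\eps/2)^c$ is harmless because the third paragraph covers all of $\Gamma(\eps)^c\cap\cR$, are both sound).
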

\begin{proof}
We consider the case where $X_{s_-}\notin supp(\psi)$, that is $\psi(X_{s_-})=0$. Thus $X_{s^-}$ is in $\mathcal{S}$ or $X_{s^-}$ is in $\cR$ but $d(X_{s^-},\bord) < \eps$. 
\begin{enumerate}
\item If $X_{s_-}\in\mathcal{S}$, then $X_s\in\mathcal{S}$, hence $\psi(X_s)=0$.
\item Let $z\in\cR$ with $d(z,\bord) < \eps$ and $x\in \bord$ such that $d(z,\mathcal{S})=\|z-x\|$. Let us prove that $z+\beta(s,z,e)\in\mathcal{S}$ by contradiction. Assume that $z+\beta(s,z,e)\notin\mathcal{S}$ and consider the following convex combination:
$$z_t:=(1-t)(z+\beta(s,z,e))+t(x+\beta(s,x,e)).$$
Now since $\beta$ is Lipschitz continuous w.r.t. $x$:
\begin{eqnarray*}
\|z_t-(x+\beta(s,x,e))\| &=& (1-t)\| z+\beta(s,z,e)-x-\beta(s,x,e)\|\\
& \leq & (1-t)(1+K_\beta) \|z-x\| \leq (1-t)(1+K_\beta)\eps \\
& \leq & (1+K_\beta)\eps < \nu.
\end{eqnarray*}
Since $x \in \bord$, $x+\beta(s,x,e)\in \mathcal{S}$. But $z+\beta(s,z,e)\notin \mathcal{S}$. Thus by continuity there exists $t_0 \in (0,1)$ such that 
$$z_{t_0}:=(1-t_0)(z+\beta(s,z,e))+t_0(x+\beta(s,x,e))\in\bord.$$
Thus we have obtained $x \in \bord$ and $z_{t_0}\in \bord$ such that 
$$\|z_{t_0}-(x+\beta(s,x,e))\| < \nu \Rightarrow d(x+\beta(x,s,e),\bord) < \nu.$$
This leads to a contradiction. So we deduce $z+\beta(s,z,e)\in\mathcal{S}$.

Hence if $X_{s_-}\in \cR$ with $d(X_{s^-},\bord) < \eps$, $X_s \in \mathcal{S}$ and $\psi(X_s) = 0$.
\end{enumerate}
Now consider the quotient 
$$\frac{\psi(X_s)}{\psi(X_{s^-})} = \frac{\psi(X_s)}{\psi(X_{s^-})} \ind_{\supp(\psi)}(X_{s^-}).$$
The first part of the proof shows that for any $\eps < \eps_1$, we have:
$$\frac{\psi(X_s)}{\psi(X_{s^-})} = \psi(X_s)  \ind_{\Gamma(\eps)}(X_{s^-}).$$
Indeed if $X_{s^-}$ is in $\supp(\psi) \cap \Gamma(\eps)^c$, then $X_s \in \mathcal{S}$, and thus the quotient is null.
\end{proof}

Now we can deal with the term given by \eqref{eq:jump_term}. By H\"older's inequality we obtain:
\begin{eqnarray*}
&&\E \left[\int_0^t Y^n_{s_-} |\mathcal{I}(s,X_{s^-},\phi)|ds\right] \leq \left[ \E \int_0^t a_s \phi(X_{s^-})(Y^n_s)^{q+1}ds\right]^{\frac{1}{q+1}} \\
&& \qquad \times \left[ \E \int_0^t a_s^{-1/q}\int_E \frac{\left|\phi(X_s)-\phi(X_{s_-})-\nabla \phi(X_{s_-})\beta(s,X_{s_-},e)\right|^{\frac{q+1}{q}}}{\phi(X_{s_-})^{1/q}}\lambda(de)ds.\right]^{\frac{q}{q+1}}
\end{eqnarray*}
Since $\phi = \psi^{\gamma}$, the last integral is controlled by:
\begin{eqnarray*}
&& \psi(X_{s_-})^{-\gamma/q}\left|\psi^\gamma(X_s)-\psi^\gamma(X_{s_-})-\nabla(\psi^\gamma)(X_{s_-})\beta(s,X_{s_-},e)\right|^{\frac{q+1}{q}} \\
&&\qquad  \leq C_q \phi(X_s) \left(  \frac{\psi(X_s)}{\psi(X_{s_-})}\right)^{\gamma/q} + C_q \phi(X_{s^-})  \\
&& \qquad \qquad + C_q \psi^{\gamma-(q+1)/q}(X_{s^-}) |\nabla \psi(X_{s^-}) \beta(s,X_{s_-},e)|.
\end{eqnarray*}
But with Lemma \ref{lem:control_jumps} we obtain:
\begin{eqnarray*}
&& \psi(X_{s_-})^{-\gamma/q}\left|\psi^\gamma(X_s)-\psi^\gamma(X_{s_-})-\nabla(\psi^\gamma)(X_{s_-})\beta(s,X_{s_-},e)\right|^{\frac{q+1}{q}} \\
&&  \leq C_q\left[  \psi^{\frac{\gamma (q+1)}{q}}(X_s)   \ind_{\Gamma(\delta)}(X_{s^-}) + \psi^\gamma(X_{s^-})  +\psi^{\gamma-\frac{q+1}{q}}(X_{s^-}) |\nabla \psi(X_{s^-}) \beta(s,X_{s_-},e)|\right].
\end{eqnarray*}
From the assumption on $\psi$ and since $\gamma > 2(q+1)/q$, with condition \eqref{eq:alpha_gamma} there exists a constant $C$ independent on $n$ such that:
\begin{equation}\label{eq:control_cont_4}
\E \left[\int_0^T Y^n_{s_-}|\mathcal{I}(s,X_{s^-},\phi)|ds\right]  \leq C \left[ \E \int_0^T a_s \phi(X_{s^-})(Y^n_s)^{q+1}ds\right]^{\frac{1}{q+1}} .
\end{equation}
Let us summarize what we obtained. For any $\eps$ small enough, any function $\phi=\psi^\gamma$ with $\gamma > 2(q+1)/q$, from \eqref{eq:cont_test_funct_2} and using \eqref{eq:control_cont_0}, \eqref{eq:control_cont_1}, \eqref{eq:control_cont_2}, \eqref{eq:control_cont_3}, \eqref{eq:control_cont_4} we deduce that there exists a constant $C$ independent of $n$ such that 
\begin{equation} \label{eq:control_cont_main_term}
0\leq \E \int_0^T a_s \phi(X_{s^-})|Y^n_s|^{1+q} ds \leq - \E \int_0^T \phi(X_{s^-})f(s,Y^n_s,0,0) ds \leq C < +\infty.
\end{equation}
Moreover all these estimates show that we can pass to the limit in \eqref{eq:cont_test_funct_2} (see details in \cite{popi:06}) and we have:
\begin{eqnarray} \label{eq:cont_test_funct_3}
&& \E[Y_T\phi(X_T)] = \E[Y_t\phi(X_t)] +\E\left[\int_t^T \phi(X_{s_-}) f^0_s ds\right]  \\ \nonumber
&& \quad - \E\left[\int_t^T \phi(X_{s_-}) f(s,Y_s,0,0) ds\right]\\ \nonumber
&& \quad +  \E\left[\int_t^T Y_{s_-} \ope\phi(s,X_s) ds\right] + \E\left[\int_t^T Y_{s_-} \mathcal{I} (s,X_{s^-},\phi)ds\right]\\ \nonumber
&& \quad + \E\left[\int_t^T \left( \nabla\phi(X_s)\sigma(s,X_s) + \phi(X_{s}) \delta_s\right) Z_s ds\right] \\ \nonumber
&& \quad + \E\left[\int_t^T \left[\int_E (\phi(X_{s})-\phi(X_{s_-}))U_s(e)\lambda(de)+\mathcal{U}_s\phi(X_{s_-}) \right] ds\right].
\end{eqnarray}
Estimate \eqref{eq:control_cont_main_term} also holds with $Y$, and once again from \eqref{eq:control_cont_0}, \eqref{eq:control_cont_1}, \eqref{eq:control_cont_2}, \eqref{eq:control_cont_3} and\eqref{eq:control_cont_4}, we can let $t$ go to $T$ in \eqref{eq:cont_test_funct_3} in order to have:
$$\E\left[ (\liminf_{t \to T} Y_t) \phi(X_T)\right] \leq  \lim_{t\to T} \E[Y_t\phi(X_t)]= \E[\xi \phi(X_T)] .$$
Recall that the function $\phi$ is equal to one on $\Gamma(\eps)$, and $\eps$ can be as small as we want, and we already know that $\liminf_{t \to T} Y_t \geq \xi$ a.s. This last inequality shows that in fact a.s.
$$\liminf_{t \to T} Y_t = \xi.$$
This achieves the proof of Theorem \ref{thm:equality}.

The proof of Theorem \ref{thm:equality} shows that the limit of $Y_t$ exists in mean in the following sense: for smooth function $\phi$
$$\lim_{t \to T} \E (Y_t \phi(X_t)) =  \left\{ \begin{array}{ll}
\E (\xi \phi(X_T)) & \mbox{if  } \supp(\phi) \cap \cS = \emptyset, \\
+ \infty & \mbox{if  } \E ( \phi(X_T) \ind_\cS) > 0.
\end{array} \right.$$

\section*{Conclusion}

To finish this paper, we gather together the theorems \ref{thm:exists_limit} and \ref{thm:equality}: under the conditions {\rm \textbf{(A*)-(B)-(C)-(D)-(E)}}, with the assumptions \eqref{eq:f0_int} and \eqref{eq:cond_rho}, if the filtration $\bF$ is left-continuous at time $T$ and if one of the next three cases holds:
\begin{itemize}
\item $f$ does not depend on $u$ or $\pi(t,0,u)=f(t,0,0,u)-f(t,0,0,0) \geq 0$ a.s. for any $t$ and $u$;
\item $\vartheta \in \bL^1_\lambda(E)$ and there exists a constant $\kappa_*>-1$ such that $\kappa^{0,0,u,0}_s(e) \geq \kappa_*$ a.e. for any $(s,u,e)$;
\item $\lambda$ is a finite measure on $E$;
\end{itemize}
then a.s. 
$$\lim_{t\to T} Y_t = \xi.$$
Note that {\bf (C)}, {\bf (D)}, {\bf (E)} depend only on the terminal condition $\xi$ and the forward process $X$. The assumptions \textbf{(A*)}, {\bf (B)}, \eqref{eq:f0_int} and \eqref{eq:cond_rho} are conditions on the generator $f$. They are satisfied in
\begin{itemize}
\item Example \ref{ex:control_ex} with $\lambda$ finite, $\al \in L^\ell((0,T)\times \Omega)$ for $\ell < 2$, $q > 2\ell/(2 - \ell)$ and $\gamma$ belongs to $L^1((0,T)\times \Omega)$ ;
\item Example \ref{ex:toy_ex} with $q > 2$ ;
\item Example \ref{ex:power_sing} with $-1 < \varsigma < q$, $2(1+\varsigma) < q$ and $\varpi < 1$.
\end{itemize}

\vspace{1cm}
\noindent {\bf Acknowledgements.} We would like to thank the anonymous referee for helpful comments and suggestions. 


\bibliographystyle{plain}
\bibliography{biblio_sing_BSDE_jumps}

\end{document}